\theoremstyle{plain}
\newtheorem{thm}{Theorem}[section]
\newtheorem{lem}[thm]{Lemma}
\theoremstyle{definition}
\theoremstyle{remark}
\begin{document}
\title{ Inference of a Dyadic Measure and its Simplicial Geometry from Binary Feature Data  and  Application to Data Quality}
\thanks{The author gratefully acknowledges the CCICADA Center at Rutgers and the CCICADA Data Quality Team for providing the raw data quality statistics and thanks Christie Nelson for explaining them. The author also gratefully acknowledges  use of the open source Computational Homology Project software (CHomP)\cite{HaMi2014}\cite{chomp} and  thanks Shaun Harker for assistance with
 the installation and use of the software.This work was partially enabled by DIMACS through support from the National Science Foundation under grant number CCF-1445755.} 

\author{
Linda Ness\\
Rutgers University\\
\href{mailto:linda.ness@rutgers.edu}{linda.ness@rutgers.edu}
}

\date{April 29, 2017}
\maketitle

\newpage

\tableofcontents

\newpage
\section{Introduction }

Representation is a fundamental problem in data analysis. Data is pre-processed into one or more representations, preferably by an automated inference algorithm. The inferred representations determine the type of analysis, the results of the analysis and the privacy and security of the results of the data analysis. Exploiting theoretically-based representations improves reproducibility and replicability if the theoretical basis is algorithmizable as it enables re-use of algorithms, methods, and concepts and therefor clear documentation of the reasoning used in the analysis.   It can also clarify the privacy and security of the results of the data analysis. 

In this paper we will describe a new method for representing a data set for which  binary feature have been defined. The method algorithmically infers two related representations: a representation of the data set as a non-parametric multi-scale dyadic measure using the dyadic product formula representation\cite{BaNeJoSh16} \cite{FeKePi91} \cite{Ness16} and a summary of the simplicial geometry of the support of the measure in terms of the betti numbers of a variant of a nerve simplicial complex determined by the dyadic measure. Both of these representations are based on mathematical theory.  Typically data sets with binary features are analyzed using just the vector of values of the binary features (e.g. numerical linear algebra methods, machine learning classification methods, statistical methods). Our proposed representation method infers many more features from the raw binary features. This larger feature set more precisely describes the statistics and geometry of the data. This larger set of inferred canonical features can be used to statistically fuse sets of data samples with binary features for purposes of statistical analysis, prediction and decision making (including machine learning). For example, they can also be used to do multiscale hypothesis testing.  We will illustrate the method on a data quality data set, whose binary features are quality constraints for a raw data set. 

The analysis of the real data set led us to a Simplicial Binary Feature Representation Theorem for Dyadic Measures. The theoretical result is that the two concepts of counting measures on  binary feature sets  and dyadic sets with dyadic measures are essentially equivalent concepts for finite dyadic sets. The theorem also shows that there are canonical simplicial complexes determined by the measure are more abstract representations, independent of the order of the features,  whose betti numbers statistics have a differential privacy property. We also compare the work here to some related work on measures on tree structured sets, recent multi-resolution theory, and computational topology and suggest some research directions. As part of this discussion, we prove a Representation Lemma for Measures on Tree Structured Sets, which removes the dyadic restriction and makes explicit  some of the resulting complexities. Data sufficient to reproduce the analysis of the example is included in Appendix B.

\subsection {Representing Data Sets with Binary Features}

Data sets are often preprocessed into a set of binary features. Each data point is then represented by (mapped to) the set of 1's and 0's  indicating the value of each binary feature on the data point. Dually, each binary feature determines two subsets of data points, which we will call feature sets: the subsets of data points $F_i = 0$ and $F_i = 1$ on which the $i^{th}$ feature $F_i$ has value 0 and value 1. Typically there is not a 1-1 mapping between the data points and the feature sets; in some cases there are many fewer features than there are data points, as is the case in the data quality data set. The counting measure of the data set is the  number of data points mapped to a possible value of the set of features. It provides more information about the data set than just the image of feature mapping if the feature mapping is not one to one. The counting measure is a measure on the sigma-algebra generated by the feature sets. Although this sigma algebra contains $2^n$ sets, one for each possible value of the set of $n$ features, only relatively few of these sets support the measure (i.e.  have non-zero measure) for many real world data sets, so it is often practical to compute this measure.   When an order is chosen on the binary features, the binary feature sets form an ordered binary tree. The counting measure then has a unique dyadic product formula representation with one product coefficient parameter for each non-leaf node in the tree.  The product coefficient parameters are unique, given the order of  the binary features,  and provide a  unique set of multi-scale statistics which can be used to represent and visualize the data set. The geometry of the the hyperplanes determined by the binary features can be complex. One way to characterize this geometry is topological using simplicial complexes and their betti numbers. (A simplicial complex is a collection of sets, all of whose subsets are contained in the collection. The support of the measure is a set on which it is positive and not zero.)  We prove that all  dyadic sets can be pre-processed into a set of binary features and all dyadic measures determine a binary features which then determine canonical nerve complexes, so canonical betti number statistics can be defined and computed for all dyadic measures. 

\subsection {Data Quality Application Overview}

We computed this representation (product coefficients  and betti numbers)  for 5 data sets whose binary features were a common set of data quality constraints. The binary feature representation of each data item represented its data quality. If the value of a feature was $0$ on a data item, the data item violated the quality constraint. The CCICADA Data Quality team had previously specified 30 complex domain specific data quality constraints for the set of data after conducting extensive interviews with domain experts and analyzing the raw data set . Each data item was a row in a relational data base.  The original data consisted of  5 samples of data from 5  data sources. The first source was the composite source; the other four sources were distinct. The composite source consisted of $~ 4000$ data items.  The features were ordered by the counts in the composite source. Only the counts and the  binary feature values were made available to us for this analysis. The raw data and its description remained private. Our goal was to find representations that would enable us to describe the mutual constraint violation sets and understand their relationships concisely and canonically. In other words, we wanted to describe and understand the intersections of the 30 binary feature sets. While there were  potentially $2^{30}$ possible types of mutual constraint violation sets, only 219 maximal constraint violation sets were non-empty in the composite source. These 219 sets are  the canonical discrete disjoint events for the data set. Note that the mutual constraint violation sets can also be viewed as edges of a hypergraph whose vertices are the individual data quality items. 

For each these 5 data sources, the source data was represented as a  non-parametric multi-scale dyadic measure and its product coefficient parameters was computed. Also, for each data source the associated nerve simplicial complex and its betti numbers  were computed. Both of these representations separately distinguished the data sources in this case. The 5 product coefficient parameters for the five measures were visualized using 2-dimensional daywheels. None of the five sources had a data item violating more than ten constraints, there were no simplicial complexes of more than $10$ dimensions, so only 10 betti numbers needed to be computed. The betti numbers were computed using the open source Computational Homology Project software (CHomP)\cite{HaMi2014}\cite{chomp}.

\section { The Binary Tree Representation for Dyadic Measures} \label{dyadicmeasures}
\subsection {Ordered Binary Feature Sets Determine Dyadic Sets and Vice Versa}  \label{dyadicsets}
An ordered set $\mathcal{F} = \{F_1, ...., F_{maxscale}\} $ of binary features defined on a set $D$ determines an  ordered binary tree $\mathcal {T} $  with levels $0$ through $maxscale$ and associates a subset $S(n)$  of $D$ defined by feature equations with each node $n$ in the tree. The maximum scale $maxscale$  can be finite or infinite. The root node set corresponds to the whole set $D$. The left and right child sets at distance 1 (scale 1 or level 1) from the root correspond to the first feature set $F_1 = 0$ and its complement $F_i = 1$. For a node $n$ at distance $i$ from the root node, reached from the root by the binary path $ L(n) = p_1 .....p_i$  the assicated node set $S(n)$ is defined by the $i$ feature feature equations: 
\begin{equation} \label{nodeequation}
S(n)   = \cap_{j = 1, ... i, p_j = L(n)_j}  (F_j = p_j)
\end{equation} 

Thus the node set is the intersection of the first $i$ feature sets $F_j = p_j$ using the values $p_j$  specified in the path $L(n)$ to the node. 
There are  $2^i$  nodes $n$ at level $i$ (i.e. distance $i$ from the root)  corresponding to the $2^i$ sets obtained by intersection the first $i$ feature sets $F_i = 0$  and their complements $F_i = 1$ .  The collection of node sets $\mathcal{S} = \{S(n)\}$ is therefore a \textit{dyadic set}. The definition of a dyadic set $X$ is: an ordered binary set system for $X$  consisting of disjoint left and right child subsets for each parent set, whose union is the parent set, and whose root set is $X$.  

The feature sets $F_i = 0/1$  can be obtained from the collection of node sets $\mathcal{S} = \{S(n)\}$ at level $i$  by union operations.  The feature set $F_i = 0$  is the union of the left child sets of the level $i-1$ node sets $S(n)$. The feature set  $F_i = 1$  is the union of the right child sets of the level $i-1$ node sets $S(n)$. In fact, these union relations can be used to define an ordered set $\mathcal{F} = \{F_1, ...., F_{maxscale}\} $ of binary features for each dyadic set.  
Thus for any dyadic set $(X,\mathcal{S})$  the  sigma algebra $ \Sigma(\mathcal{ S})  $  generated by the node sets equals $ \Sigma(\mathcal{ F) } $ the signma algebra generated by the feature sets. 
\begin{equation} 
\Sigma(\mathcal {S}) = \Sigma(\mathcal {F})
\end {equation}
Since a sigma algebra consists of sets generated by intersection, complementation, and countably infinite union from the generating set. Equation \ref{nodeequation} shows $ \Sigma(\mathcal {S}) \subseteq \Sigma(\mathcal {F})$. and the union equations defining the features show $ \Sigma(\mathcal {F}) \subseteq \Sigma(\mathcal {S})$

 \subsection{ Geometry of a Dyadic Set}
 
In addition to the ordered binary tree geometry of a dyadic set $D$, there are two additional geometric  views.
 First, there is the geometry of the image of the dyadic set under the mapping $D \rightarrow \mathit{Binary\; Feature\; Space}$, where $d \rightarrow  (F_1(d), ...., F_{maxscale}(d))$. Here $\mathit{Binary\; Feature\; Space}$ is a vector space of dimension $maxscale$ where the scalar field is $\mathbb{F}_2 = \{0,1\}$.  For a node at level $i$, the image of the node set $S(n) \subseteq D$ under the mapping is a linear space of codimension $i$ because it is defined by $i$ linear binary feature equations. The image of a leaf is the binary feature vector defined by the $maxscale$ equations defining the leaf node set. 
Next there is the geometry of the image of the dyadic set under the canonical mapping $D \rightarrow [0,1] $ which takes a node set $S(n)$ at level $i$ to a dyadic interval of length $2^{-i}$. Recall that unit interval $[0,1]$ is a dyadic set with the binary set system $ \mathcal {S}$  consisting of the  dyadic unit intervals
 $[i\cdot 2^{-j}, (i+1)\cdot 2^{-j})$ for $i = 0, ..., 2^{j} - 2$ and $[i\cdot 2^{-j}, (i+1)\cdot 2^{-j}]$ for $i = 2^{j} - 1, j = 0, 1, ....2^i - 1$. Under this mapping 
 the $i^{th}$ feature set $F_i =0$ is the set $\{x \in [0,1]: i^{th}$ binary  digit is 0 $\}$ and its complement $F_i =1$ is the set $\{x \in [0,1]: i^{th}$ binary digit is 1 $\} $. For a node $n$ at level $i$  whose path $P$  from the root has labels $P = (p_1, ..., p_i)$, the associated node set $S(n) \subseteq D $ 
 is the set $\{x \in [0,1]: j^{th}$ binary digit is $p_j , j = 1, ..., i\}$. For infinite trees, this shows that the image of a dyadic set under the canonical mapping $D \rightarrow [0,1] $ is a Cantor-like set. The images of the leaf sets are in the lexicographic order determined by the feature set indices and their binary values. 

\subsection {Product Coefficients for Measures on Dyadic Sets} \label{pcs}
Let $\mu$ denote a measure on a dyadic set  $(D, \mathcal {S})$ . Then $\mu$ assigns non-negative  values to the node sets in the binary tree in a manner which satisfies the additive property:  if $L(n)$ and $R(n)$ are the left and right child nodes of $n$, 
$\mu(L(n)) + \mu(R(n)) = \mu(n)$, i.e. the sum of the measures of the left and right child nodes sets is the measure of the parent node set. Counting measure on a finite sample of $D$ is one example of a dyadic measure. Counting measure is the number of sample data items in each node set $S(n)$.

 For each non-leaf node $n$ of the tree, 
 the \textit{product coefficient parameter} is defined as a solution to the following equations:
\begin {equation}
\mu(L(n)) = \frac{1}{2}(1 + a_n)\mu(n)
\end {equation}
\begin {equation}
\mu(R(n)) = \frac{1}{2}(1 - a_n)\mu(n)
\end{equation}
A unique solution to the equations exists if $\mu(n) \neq 0$. If $\mu(n) = 0$ the solution is not unique. To make the product coefficients unique we adopt the convention that whenever one of the 'halves' of a dyadic set has measure zero, the product coefficients for all of the descendant sets  of the zero measure 'half' have zero product coefficients. This convention implies that if $\mu(n) = 0$  the solution $a_S = 0$ is chosen. 
Thus the product coefficient parameter $a_n$ is defined by the simple formulas:
\begin{equation}
a_n =
\begin{cases}
0 & \text{ if } \mu(n) = 0 \\
\frac {\mu(L(n)) - \mu(R(n))} {\mu(n)} & \text{ if } \mu(n) \neq 0 \\
\end{cases}
\end{equation}

A factor can also be associated to each edge  of the tree. The factor is: 
\begin {itemize} 
\item $1 + a_{P(n)}$ for a left edge emanating from the parent node $P(n)$
\item $1 - a_{P(n)}$ for a right edge emanating from the parent node $P(n)$
\end {itemize}

The product coefficient parameters uniquely determine the measure $\mu$  by the Dyadic Product Formula Representation ( Lemma 2.1)\cite{BaNeJoSh16}, even when the binary tree is infinite. The basic observation is that $\mu(S(n))$ equals $\mu(D)$ multipled by the product of the factors from the root to a node $n$  divided by $2^{-scale(n)}$. This evaluation process can be summarized by a product formula involving Haar-like functions. For each node set $S(n)$ define a Haar-like function $h_{S(n)}$ by

\begin{equation}
\begin{cases}
h_{S(n)}  = 1\ on\ S(L(n))\\
h_{S(n)}  = -1\ on\ S(R(n)\\
h_{S(n)} = 0\ elsewhere\\
\end{cases}
\end{equation}

The Dyadic Product Formula Representation is:
\begin{equation} \label{dyadicproductformula}
\mu = \mu(D) \cdot \prod_{n \in \mathcal S} (1 + a_n\cdot h_{S(n)}) \cdot dy
\end {equation}
where $dy$ is the dyadic measure which assigns a measure of $2^{-i}$ to node sets at scale $i$. It holds even when the binary tree is infinite. It says that value of the measure for the each node set $S_n$ 
can be computed in terms of the product coefficient parameters of its ancestor nodes by multiplying the factors associated with the node set $S(n)$ and its ancestor nodes.

Since the product coefficient parameters associated with the nodes  are all in the interval $[-1,1]$ they can be color coded and  visualized by day wheel figures. The day wheel figures for five data quality sources are shown in Figures \ref{fig:source1} and \ref{fig:source2to5} 
Another part of the Dyadic Product Formula Representation Lemma  is: any assignment of product coefficient parameters from the interval $[-1,1]$ following the convention determines a unique dyadic measure on $D$.

\subsection {The Multiscale Support of a Dyadic Measure} \label{support}
A support set for a measure  $\mu$ is any measurable set $S$ such that the measure of its complement $S'$ is zero, i.e.  $\mu(S') = 0$. For a dyadic measure $\mu$ on a dyadic set $(D, \mathcal{S})$ of finite scale $maxscale$, there is a unique smallest measurable support set. It is the union of the leaf sets which have non-zero measure. From the point of view of binary features, these are the leaf sets defined by a binary root to leaf path $L$ whose $maxscale $ equations $F_i = p_i(L), i  = 1, ..., maxscale + 1$ have a solution set with positive measure (strictly greater than 0). They define the discrete event set of occurrences for this measure.
For each scale $i >= 0$ if a dyadic set, the measure on the sub set system of $\mathcal{S}$ consisting of sets of scale $i$ or less therefore also has a unique support set -- the leaves of the tree of height $i$ which have positive measure. These define the multiscale support sets for the measure. 

The support of the measure has several geometric interpretations, in addition to the intrinsic tree geometry. The image of the leaf sets in the support with positive measure under the mapping $D \rightarrow \mathit{Binary\; Feature\; Space}$ is a subset of the binary feature vectors (i.e. a set of points in binary feature space). The image of the leaf sets in the support under the mapping $D \rightarrow [0,1] $ is a subset of dyadic intervals of length $2^{-maxscale - 1}$. 
 
\subsection{Order-Dependence, Bayes Formulas, and Invariant Measures} 

An important point is that the set of product coefficients depends on the order of the binary features. Every finite permutation $g$ of the features determines another set of product coefficient representation of the measure and hence another set of product coefficients. The two sets of product coefficients are related by a Bayes formula, so the new parameters are related by rational algebraic formulas to the original set of parameters. One quantitative formulation of this Bayes-type rule is given in Appendix 2 of version 2 of \cite{BaNeJoSh16}.

For example, if there are $n$ binary features, the symmetric group  $\mathcal{S}_m$ permutes $m$ features with indices $i_1 < ... < i_m <= n$  it permutes the binary root to leaf paths $P = (p_1, ....., p_n)$  by permuting the path label elements with indices $i_1 < ... < i_m <= n$, leaving the rest fixed. Explicitly, for  a permutation  $g \in \mathcal{S}_m$ , $P = (p_1, ....., p_n)$,  $g(P) = (q_1, ...., q_n)$ where 

\begin{equation}
\begin{cases}
q_i = p_i  \;  if \; i \not\in {i_1, ..., i_m} \\
q_{g(i)} = p_i   \; if  \; i \in {i_1, ..., i_m}\\
\end{cases}
\end{equation}

This implies that Bayes rule for this group action falls into 3 cases. The product coefficients determined by the new order of features are the same as for the old order of features for levels $ 0 <= i < i_1$. Since the subtrees rooted at level $i_m$ nodes are permuted, the product coefficients for levels $i > i_m$ are also permuted, because the are completely determined by their level $i_m$ ancestor. The  product coefficients for nodes at levels $i_1 .... i_m$ for the new order, can be recomputed top-down using the product formula for the old order  for the measures for the left and right child nodes.  

Each group action determines a measure invariant to the group action, which can be explicitly computing by averaging the measures at the leaf nodes in an orbit of the group action on the root to leaf paths. Additional research and experimentation will be  required to determine how to exploit these invariant measures. 

\section {Betti Numbers for Simplicial Complexes}

\subsection{Three Canonical Simplicial Complexes for a Dyadic Measure} \label{nervecomplexes}

An abstract simplicial complex $\mathcal{S}$ is a family of non-empty sets, all of whose non-empty subsets are in the family. For example, the collection of sets consisting of $\{1,2\}, \{3,4\}, \{2\}, \{3\}$,  and $\{4\}$ is not a simplicial complex because the set $\{1\}$ is not in the family.  A family of subsets $\mathcal{F} = \{S_1, ....., S_n\}$ of some universe $U$ determines a simplicial complex  called its $\it{nerve}$ $\it{complex}$. The nerve complex consists of the sets of indices of all subsets which have non-empty intersection.
For example if $\mathcal{F} = \{S_1, ....., S_3\}$   where $S_1 = \{a,b,c\}$, $S_2 = \{b,c,d\}$, $S_3 = \{a,e\}$, its nerve complex $\mathcal \{N\} = \{\{1\}, \{2\}, \{3\}, \{1,2\}, \{1,3\}\}$. 

A dyadic measure $\mu$ on a finite dyadic set $(D, \mathcal{S})$ determines 3 different canonical simplicial complexes. Let $F_1, ...., F_{maxscale}$ denote the binary features for $\mu$.  
Let  $\mathcal{N}(\mu)$ denote the collection of sets $P$ of ordered pairs $(i,b)$  such $\mu(\cap_{(i,b) \in P} (F_i = b) )  \neq 0 $. The ordered pairs are indices and are lexicographically ordered. 
Let  $\mathcal{N}_0(\mu)$ denote the collection of sets $I$ of indices $i$ such $\mu(\cap_{i \in I} (F_i = 0) )  \neq 0 $. 
Let  $\mathcal{N}_1(\mu)$ denote the collection of sets $I$ of indices $i$ such $\mu(\cap_{i \in I} (F_i = 1) )  \neq 0 $. 
The sets in each of these collections are ordered by containment. There are two observations. First $\mathcal{N}(\mu)$, $\mathcal{N}_0(\mu)$, and $\mathcal{N}_1(\mu)$ are each simplicial complexes, because if a set determined by a set of indices has positive measure, so does a subset determined by a subset of indices. They are analagous to nerve complexes, but the intersection condition of having a non-zero measure is more restrictive than having a non-empty intersection. Second, the maximal sets for each of these simplicial complexes (i.e. the maximal faces) correspond to leaf node sets in the support of the $\mu$. The maximal faces of each are sufficient to completely determine the simplicial complex. Each set in the multiscale support of $\mu$ is represented by a simplex in $\mathcal{N}(\mu)$.  Each leaf node set in the support of $\mu$ is represented by a maximal set in $\mathcal{N}(\mu)$.  Each leaf node set in the support of $\mu$ is represented by a maximal simplex  in $\mathcal{N}_0(\mu)$, unless $\mu(\cap_{i \in [1,..., \mathit{maxscale}} (F_i = 1) )  \neq 0 $. In this case the maximal sets of  $\mathcal{N}_0(\mu)$ are missing this one leaf set in the support. 
Similarly, each leaf node set in the support of $\mu$ is represented by a maximal set in $\mathcal{N}_1(\mu)$, unless $\mu(\cap_{i \in [1,..., \mathit{maxscale}} (F_i = 0) )  \neq 0 $. In this case the maximal sets of  $\mathcal{N}_1(\mu)$ are missing this one leaf set in the support. In the "generic case" the maximal faces of these simplicial complexes correspond exactly to the support of the dyadic measure $\mu$. In the data quality example, we used $\mathcal{N}_0(\mu)$, because these sets correspond to data quality constraint violations and every data vector in the data set violated at least one data quality constraint. 
 
\subsection{The Betti Numbers of Simplicial Complex} \label{bettinumbers}
We will next quickly recall the definition of betti numbers of a simplicial complex so that we can use them to describe the simplicial geometry of the nerve simplicial complexes. The definition of the betti numbers uses a few more terms and concepts \cite{KaMi}. Each set in a simplicial complex is often called a $\textit{face}$; the $\textit{dimension of  a face}$ is 1 less than the cardinality of set. Faces of dimension 0 are called $\it{vertices}$.  The $\textit{dimension of the simplicial complex}$ is the maximum of the dimension of the faces. We will only work with finite simplicial complexes.  
A simplicial complex $\mathcal{S}$ is abstractly summarized by its $\it{betti\ numbers}$ $\beta_i, i = 0, ... dim(\mathcal{S})$. These are computed algebraically as the rank of the quotient of two free abelian groups: the cycle group and the boundary group for each dimension.  The groups are defined in terms of a boundary map which maps a face to a formal sum of  its faces (of dimension one less) . More generally the boundary map maps a formal sum of faces (an element of the free abeilan group called $\it{chains}$ ) to the formal sum of the boundaries of its faces. The subtlety is orientation and signs in the boundary map. We can finesse the first,  if we assume the vertices are ordered and the face subsets are listed using this ordering. Then the boundary map maps a face to a signed formal sum of its faces. The subface of each face omits just one vertex of the face. If this is the $i^{th}$ largest vertex of the face, the sign $(-1)^{i}$ can be used. A fundamental lemma in simplicial homology (and an easy calculation) shows the composition of the two successive boundary maps determined by this choice is the zero map. 
For each dimension $i, 0 <= i <= dim(\mathcal{S})$ the cycle group $Z_i$  is the kernel of the boundary map on the free group of chains of that dimension, and the boundary group $B_i$ is the image of the boundary map from the free group of chains of one higher dimension. The i{th} betti number is the rank of the quotient group $Z_i/B_i$. 
\begin{equation}
\beta_i(\mathcal{S}) = rank(\frac{Z_i}{B_i})
\end{equation}
In many cases (i.e. if there is no torsion) this rank is just the difference of the ranks, $rank(Z_i) - rank(B_i)$. The rank of a free group is the number of basis elements, analogous to dimension for vector spaces.  
The betti numbers do not provide any information about the existence or non-existence of torsion elements (elements for which a multiple is 0) in the quotient groups $\frac{Z_i}{B_i}$. In this paper we used integer coefficients in the free group calculations.
An important fact is: the betti numbers do not depend on the order of the features. They are invariants of the action of permutation groups which reorder the feature set. 
\subsection{Simplicial Geometric Explanation of Betti Numbers}
The betti numbers summarize the simplicial geometry of the simplicial complex. Think of $0$-simplices as points, $1$-
simplices as edges, $2$-simplices as triangles, $3$-simplices as tetrahedrons etc. This geometry is relational or combinatorial, generalizing graph geometry if the simplicial complex is the nerve complex determined by an ordered set of binary features. It is not metric or manifold geometry, although for other choices of simplicial complexes (e.g. the RIPS complex)  on data sets which are subsets of a metric space, betti numbers can provide geometric information about metric and manifold geometry 
\cite{BoMuTu}.  

The $0^{th}$ betti number has a very intuitive geometric interpretation. The $0$-dimensional faces are the vertices of the simplicial complex. For a nerve complex determined by  binary feature sets $F_i = 0$ and $F_i = 1$, they are just the indices of  these  sets: (i,0) and (i,1). The image of the boundary map consists of linear combinations  of indices  associated with pairs of feature sets which have a non-empty intersection: $(F_i = p_i )\cap (F_k = p_k )\neq \emptyset$, where $i > k$. The image of the boundary map associated with this pair of features sets is: $(k,p_k) - (i,p_i)$. Elements of the $0$-dimensional  quotient group $\frac{Z_0}{B_0}$ are equivalence classes. The representatives $(k,p_k) + B_0$ and $(i,p_i) + B_0$ are in the same equivalence class if they are associated with a pair of feature sets which has a non-empty intersection. Thus the $0^{th}$ betti number  $\beta (0)$ is the number of connected components, where the connectivity relation is determined by non-empty intersection of pairs of sets. 

The highest dimensional betti number is $\beta(d)$, where $d$ is the maximum of the dimension of the faces. It is also quite simple to explain. Since there are no boundaries $\beta(d)$ is  the  rank of $Z(d)$ the free group of cycles of dimension $d$. 
Cycles occur only when there are constraints among the boundaries of the $d$-dimensional faces, i.e. constraints among the the alternating boundary linear combinations of  subfaces of faces. Cycles occur when there is a collection of $d$-dimensional simplices (each specified by $d+1$ indices) which are all of the faces of a $(d+1)$-dimensional simplex. Thus the constraints are complex. Two independent cycles occur when: there are two collections of  $d + 2$ $d$-dimensional simplices (each specified by $d+1$ indices); each collection consists of all of the faces of a $(d+1)$-dimensional simplex; the two collections have no vertices in common. Since the cycle constraints are complex, often there are none: e.g. if there are very few faces of maximum dimension. So the typical value for the highest betti number is 0 if $d$ is large. 

An intermediate betti number $\beta(i), 0 < i < d$ is the number of equivalence classes of cycles of dimension $i$, where two cycles (i.e. two formal linear combinations of $i$ dimension simplices whose boundaries equal $0$) are equivalent if there difference is a boundary of linear combination of simplices of dimension $i+1$. For example, suppose there are two independent cycles - e.g. there are two collections of  $i + 2$ $d$-dimensional faces (each specified by $i+1$ indices); each collection consists of all of the faces of an $(i+1)$-dimensional simplex; the two collections have no vertices in common;  there is one $i + 1$ dimensional simplex whose faces are the simplices in the first collection; there is no   $i + 1$ dimensional simplex whose faces are the simplices in the second collection. Then $\beta(i) = 1$ (and not $2$) because the first collection is in the image of the boundary of the higher $i+1$-dimensional simplex. 
The equivalence relation for each of these intermediate dimensions can be intuitively viewed as a type of higher-level connectivity determined by simplicial geometry.  Thus the intermediate betti numbers intuitively indicate the number of groups of faces of potential but not realized higher-dimensional simplices. 
\subsection{Example}
Table~\ref{tab:exsimp} lists the faces which generate a three dimensional simplicial complex. Each line of the table is the set of vertices for a face of the complex. The complete simplicial complex consists of all of the subsets of the listed faces. 
\begin{table}[ht]
\caption{Example - List of Faces Generating A  Simplicial Complex}
\centering
\begin{tabular}{r r r r}

1 & 2  &3 \\
1 & 3 & 4 \\
2 & 3  &4 \\
1 & 2 & 4\\
5 & 6 & 7 \\
5 & 6 & 8\\
6 & 7 & 8\\
5 & 7 & 8 \\
9 & 10 & 11\\
9 & 10 & 12\\
10  &11 & 12\\
9 & 11  &12\\
9 & 10  &11 & 12\\
13 &14\\
14 & 15\\
13 &15\\

\end{tabular}
\label{tab:exsimp}
\end{table}
The betti numbers for the simpicial complex are listed in Table~\ref{tab:exbettinumbers}. There are four groups of disjoint faces. The first group of two dimensional faces  involves vertices $1, 2, 3,4$. The second group of two dimensional faces involves vertices $5,6,7, 8$. The third group of two dimensional faces involves vertices $9, 10, 11, 12$. There is one 3 dimensional face: $\{9,10,11,12\}$ The fourth group consists of one dimensional faces and involves vertices $13, 14,15$. Each of the four groups determines a distinct connected component, so $\beta(0) = 4$. The third dimensional betti number $\beta(3) = 0$ because there is only one three-dimensional face and there is no linear constraint among its faces; hence there are no three dimensional cycles. The second betti number $\beta(2) = 2$ because, although there are three independent groups of two dimensional faces, each of which determines an independent cycle,  the group involving vertices $9, 10, 11,12$ is equivalent to the boundary of the three dimensional face. The first betti number $\beta(1) = 1$ because the fourth group of faces consists of all of the faces of a potential two dimensional simplex which does not occur; hence this group determines a two-dimensional cycle which is independent from two-dimensional cycles determined by the other groups of faces. Note that if the indices in the example corresponded to indices of sets determining a nerve simplicial complex, a potential two dimensional simplex would not occur if there were 3 sets which did not have a common intersection, but each pair of the three sets did have a common intersection. 
\begin{table}[ht]
\caption{Betti Numbers for the Simplicial Complex }
\centering
\begin{tabular}{c r r r r}
Dimensions  & 0 & 1 & 2 & 3\\[0.5ex] % inserts table %heading
%\hline
Betti Numbers & 4 & 1 & 2 & 0\\
\end{tabular}
\label{tab:exbettinumbers}
\end{table}

This example could arise as the nerve complex of a set of 15 binary features. For example, if we only consider the nerve complex determined by $F_i = 0$, as will be the case in the data quality example because there we only want to study the simplicial geometry of the constraint violation sets, a simplex ${i,j,k}$ corresponds to a binary feature vector with $0$'s in the positions $i,j,k$. In the data quality example, this would correspond to the set consisting of data elements which violated these three constraints. Thus the betti numbers describe the simplicial geometry of the types of binary feature vectors. The faces listed in the example would correspond to constraint violation sets for data item. A face would be listed if at least one data item violated the set of constraints indicated by the face vertices. 

\section {Simplicial Binary Feature Representation Theorem for Dyadic Measures}
In this section we recall and combine  the statements that have been proved in the previous sections. 
First recall a dyadic set $D$ is a set together with an ordered binary set system $\mathcal{S}$  consisting of disjoint left and right child subsets for each parent set, whose union is the parent set, and whose root set is $D$. Next recall a dyadic measure is a measure $\mu$ on $(D, \Sigma(\mathcal{S}))$ which takes non-negative values on all the sets in $\Sigma(\mathcal{S})$, the sigma algebra generated by sets in $\mathcal{S}$.
  
 \begin{lem} [Simplicial Binary Feature Representation Theorem for Dyadic Measures]
 
Let $(D,\mathcal S)$ be a dyadic set and let $\Sigma(\mathcal{S})$ be the sigma algebra of sets generated by sets in $\mathcal{S}$

\begin {enumerate}
	\item Let  $F_i:D \rightarrow \{0,1\}$  denote the binary valued feature function which assigns value 0 to elements of sets which are left children of the parent nodes at level $i-1$ and assigns value 1 to elements of sets which are right children of the parent nodes at 	level $i-1$. Then the collection of sets defined by equations
			\begin{equation}
			 \cap_{j = 1, ... i}  (F_j = p_j)  
			\end{equation} 
	form a dyadic set system which equals $\mathcal{S}$. Here $ i >0$ ranges over levels in the binary tree determined by $\mathcal{S}$  and $p_1, ...., p_i$ ranges over binary strings of length $i$.
	\item Let $\mathcal{F}$ denote the set of feature sets $F_i = 0/1$. Then the sigma algebras generated by $\mathcal{F}$ and $\mathcal{S}$ are equal, $\Sigma( \mathcal{S}) = \Sigma(\mathcal{F})$.	
	\item Assume the dyadic set system $\mathcal{S}$ is finite.   Let  $\mathcal{N}(\mu)$ denote the collection of sets $P$ of ordered pairs $(i,b)$  such that $\mu(\cap_{(i,b) \in P} (F_i = b) )  \neq 0 $. Let  $\mathcal{N}_0(\mu)$ denote the collection of sets $I$ of indices $i$ such that $\mu(\cap_{i \in I} (F_i = 0) )  \neq 	0 $. Let  $\mathcal{N}_1(\mu)$ denote the collection of sets $I$ of indices $i$ such $\mu(\cap_{i \in I} (F_i = 1) )  \neq 0 $.  Then $\mathcal{N}(\mu)$, $\mathcal{N}_0(\mu)$, and $\mathcal{N}_1(\mu)$  are simplicial complexes. The simplicial complexes are independent of the order of the binary features. The indices for each maximal face in the simplicial complexes determine a leaf set in the  support of $\mu$. The support of $\mu$ is the union of maximal faces of the simplicial complex $\mathcal{N}(\mu)$.  Each leaf node set in the support of $\mu$ is represented by a maximal simplex  in $\mathcal{N}_0(\mu)$, unless $\mu(\cap_{i \in [1,..., \mathit{maxscale}} (F_i = 0) )  \neq 0 $. Each leaf node set in the support of $\mu$ is represented by a maximal simplex  in $\mathcal{N}_1(\mu)$, unless $\mu(\cap_{i \in [1,..., \mathit{maxscale}} (F_i = 1) )  \neq 0 $.
 	\item \textit{Privacy Property:}  If $\mu_1$ and $\mu_2$  are dyadic measures on $(D,\mathcal S)$ have the same support (i.e. if they are positive on the same leaf sets at level $maxscale + 1$) , then determine the same simplicial complexes $\mathcal{N}(\mu)$, $\mathcal{N}_0(\mu)$, and $\mathcal{N}_1(\mu)$, and thus have the same betti numbers for each of these simplicial complexes. 
	\item If $E$ is a set with an ordered set of  binary features $F_i:E \rightarrow \{0,1\}$, i = 1, ... n , E is a dyadic set. Any finite subset of $E$ determines a counting measure on $E$. The counting measure is  a dyadic measure, which can be represented by a dyadic product formula representation. The product coefficients in the representation are unique. \textit [A Differential Privacy Property] Any two finite subsets of $E$ which determine measures with the same support (i.e. are positive on the same leaf sets), determine the same simplicial complexes $\mathcal{N}(\mu)$, $\mathcal{N}_0(\mu)$, and $\mathcal{N}_1(\mu)$, and thus have the same betti numbers for each of these simplicial complexes. 
 \end{enumerate}
\end{lem}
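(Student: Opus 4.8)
The plan is to recognize that this statement is an assembly of facts already established in Sections~\ref{dyadicmeasures} and~\ref{nervecomplexes}, and to organize the proof into the five numbered items, citing the earlier development and supplying the short bridging arguments. For items~1 and~2 I would simply invoke Section~\ref{dyadicsets}: Equation~\ref{nodeequation} already exhibits each node set $S(n)$ at level $i$ as the intersection $\cap_{j=1}^{i}(F_j=p_j)$ along the path $L(n)$ to $n$, and since left and right children are disjoint with union their parent, these intersection sets are exactly the members of $\mathcal{S}$; the same equation gives $\Sigma(\mathcal{S})\subseteq\Sigma(\mathcal{F})$, and writing $F_i=0$ (resp. $F_i=1$) as the union of the left (resp. right) children of the level $i-1$ nodes gives the reverse inclusion.

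For item~3 I would first obtain the simplicial-complex property from monotonicity of $\mu$: if $P'\subseteq P$ then $\cap_{(i,b)\in P'}(F_i=b)\supseteq\cap_{(i,b)\in P}(F_i=b)$, so $\mu$ of the former set is at least $\mu$ of the latter, and hence membership is inherited by subsets (and likewise for the index sets defining $\mathcal{N}_0$ and $\mathcal{N}_1$). Order-independence I would argue by noting that the defining condition $\mu(\cap(F_i=b))\neq0$ refers only to \emph{which} feature sets are intersected, not to their indices, so a permutation of the features induces a bijection of the vertex set carrying each complex onto the reindexed one; the complexes thus coincide as abstract simplicial complexes, and their betti numbers, being isomorphism invariants (Section~\ref{bettinumbers}), are unchanged. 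Then, using finiteness of $\mathcal{S}$, a face is maximal exactly when no further $(i,b)$ can be adjoined without sending the measure to zero, which happens precisely when the face fixes a complete root-to-leaf path whose leaf set is positive, i.e. a leaf in the support of $\mu$; the union of these leaf sets is the support. The $\mathcal{N}_0/\mathcal{N}_1$ exceptions I would handle directly: the all-ones leaf set $\cap_i(F_i=1)$ is not cut out by any nonempty system of equations of the form $F_i=0$, so if it has positive measure it belongs to the support but is not the vertex set of a maximal face of $\mathcal{N}_0$, and symmetrically for the all-zeros leaf set and $\mathcal{N}_1$.

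Items~4 and~5 then follow quickly. Item~4 is immediate from item~3, since the three complexes depend on $\mu$ only through the collection of positive leaf sets, i.e. through the support; two dyadic measures with a common support therefore produce identical complexes and hence identical betti numbers. For item~5, $E$ with the given ordered binary features is a dyadic set by item~1 read in the features~$\Rightarrow$~tree direction; for a finite $T\subseteq E$ the counting measure $\mu_T(A)=|A\cap T|$ is nonnegative and additive on disjoint sets, so $\mu_T(L(n))+\mu_T(R(n))=\mu_T(n)$ and $\mu_T$ is a dyadic measure; by the Dyadic Product Formula Representation (Equation~\ref{dyadicproductformula}) together with the zero-node convention of Section~\ref{pcs}, its product coefficients are uniquely $a_n=(\mu(L(n))-\mu(R(n)))/\mu(n)$ when $\mu(n)\neq0$ and $a_n=0$ otherwise; the differential-privacy clause is item~4 applied to $\mu_{T_1}$ and $\mu_{T_2}$.

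I do not expect a genuine obstacle here, since the mathematics was carried out in the earlier sections; the step needing the most care is the maximal-face/support correspondence in item~3 --- specifically the asymmetry that makes the constant leaf sets exceptional for $\mathcal{N}_0$ and $\mathcal{N}_1$ --- together with phrasing order-independence at the right level, namely that the complexes are literally the same once the feature sets are viewed as an unordered family and that betti numbers are honest isomorphism invariants. I would also remark that ``differential privacy'' is being used here informally, to mean insensitivity to any change of the underlying finite sample that leaves the support unchanged.
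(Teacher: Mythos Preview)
Your proposal is correct and follows essentially the same approach as the paper: the paper's own proof is little more than a pointer to Sections~\ref{dyadicsets}, \ref{pcs}, and \ref{nervecomplexes} for items~1,~2,~3, and~5, together with the one-line observation for item~4 that equal support implies equal maximal faces, hence equal complexes, hence equal betti numbers. If anything, you supply more of the bridging arguments (monotonicity for the simplicial-complex property, the explicit handling of the all-zeros and all-ones leaf exceptions, and the isomorphism-invariance formulation of order-independence) than the paper itself does.
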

\begin{proof}
 Statements 1 and 2 are proved in Section \ref{dyadicsets}. Statement 3 is proved in Section \ref{nervecomplexes}. Statement 4 is true because equal support implies equality of maximal faces for each of the three simplicial complexes, which implies equality of the three simplicial complexes, which implies equality of betti numbers. Statement 5 is proved in Sections  \ref{dyadicsets} and \ref{pcs}. 
\end{proof}
The privacy labels in statements 4 and 5 of the theorem are justified because the simplicial complexes and their betti numbers do not reveal anything quantiative about the size of the sets assigned by the measures. They just reveal exploit the non-negativity of the measure on the support sets. The term differential privacy\cite{DwRo} is used because it applies if one of the samples contains just one additional point. The restriction for these privacy properties is that the support for the two measures (or samples) has to be the same.

\section {Data Quality Application Analysis}
\subsection{Data Quality Summary Statistics} 
Table~\ref{tab:cvsummary} summarizes the data quality constraint violations by source for each of the five sources. Recall the first source was the composite of the other four sources. There were 30 data quality constraints $F_i, i = 1 ..., 30$. A violation occurred if $F_i(data\ element) = 0$. 
The first column lists the maximum number of constraints violated by a data element.  The row corresponding to $i$ violations gives the number of data elements in each of the sources which violated a maximum of $i$ constraints; these numbers are listed in the columns labels $V^1, ..., V^5$ corresponding to sources 1, ..., 5. No data element violated more than 10 constraints, but every data item violated at least one data quality constraint.  The last row of the table lists the total number of elements in each source set.

\begin{table}[ht]
\caption{Summary of Mutual Constraint Violations by Source}
\centering
\begin{tabular}{c c c c c c}
\hline\hline
Number of Constraints Violated & $V^1$ & $V^2$ & $V^3$ & $V^4$ & $V^5$\\ [0.5ex] % inserts table %heading
\hline
1    &    2487 &        385     &    797   &      465    &     840\\
           2   &      742    &      83 &        267   &      145  &       247\\
           3      &   255   &       26    &      98    &      52     &     79\\
           4   &      156   &       14    &      56     &     40      &    46\\
           5    &     165   &       15     &     49     &     43      &    58\\
           6     &     91    &       6      &    31     &     37  &        17\\
           7     &     18     &      1      &     6      &     7     &      4\\
           8     &      7     &      1     &      3      &     0     &      3\\
           9       &    1     &      0     &      0      &     0     &      1\\
          10     &      2      &     0   &        2     &      0     &      0\\[1ex]
\hline
Total Number of Data Elements & 3924   &      531    &    1309     &    789     &   1295\\[1ex]
\end{tabular}
\label{tab:cvsummary}
\end{table}

\subsection{Representation of Counting Measure for the Data Quality Data}

We ordered the 30 data quality constraints $F_1, ...., F_{30}$ in decreasing number of violations for the total data set  and found that 5 of the constraints were not violated. We then computed the ordered labeled binary tree for each of the five sources determined by the binary features $F_i$. 
The nodes of the tree correspond to mutual violation constraint sets and the leaves of the tree correspond to maximal mutual violation constraint sets. The counting measure for each of the nodes is the number of data items which violate the constraints specified by the path label for the mutual constraint set. The counting measure for the leaf nodes is the number of data items which have the data quality vector specified by the root to leaf path. We found that only 219 leaf nodes out of a potential $2^{30}$ leaf nodes had non-zero counting measure. In other words, the ~4000 data items determined only 219 distinct data quality binary feature vectors. We also found that 5 of the 30 data quality constraints were not violated by any data item. 
 If a node has counting measure 0, all of its descendants would also have counting measure 0, so only the non-empty part of the tree needed to be computed to obtain the non-zero product coefficient parameters characterizing the data quality counting measure. 
 This example points out the importance of computing the counting measure for binary feature data. The number of feature vectors here is many fewer than the number of data items; furthermore the data is not uniformly distributed over the binary feature vectors. 
 The counting measure can be represented both as a histogram using the discrete events determined by the unique data quality vectors  and in terms of the product coefficient parameters. We will first give summary information for the histogram and then will show visualizations for the the product coefficient parameter representation. 

\subsubsection {Histogram Representation} 
The histogram for the counting measure for the composite source, Source 1, has 219 distinct events, each corresponding to a unique data quality vector. (The total volume of this histogram would be the number of data items, since it is histogram for a measure which need not have total volume 1 rather than a histogram for a probability distribution which is required to have total volume 1).  Each of the 219 distinct events corresponds to a leaf in the binary tree for the source whose path label is the data quality vector. The histogram data for the non-composite sources 2,3, 4 and 5 is shown in tables Table~\ref{tab:source2}, Table~\ref{tab:source3-part3}, Table~\ref{tab:source4-part1}, Table~\ref{tab:source4-part2}, Table~\ref{tab:source5-part1} , Table~\ref{tab:source5-part2}. In each of these tables, there is one row for each data quality vector with non-zero counting measure in the source.  The rows are listed in decreasing order of violation for each source. The first column is the counting measure for the data quality vector for the source; the third column summarizes the data quality vector by listing the constraints which have value 0, i.e. violated; the second column enumerates the data quality vector. The composite source, Source 1, is obtained by combining the information in these tables. 

The histograms are long-tailed. For example, if the data quality vectors for the composite source are ranked by their counting measure, the first 52 data quality vectors in the ranking accounted for 90\% of the data  and the last 167 data quality vectors accounted for only 10\% of the data; the 52 data quality vectors involved 22 data quality constraints -numbered 1 ... 20, 22 and 23.   Analysis of the top 50\% of the histogram data for each source is shown in Table~\ref{tab:top50percent}. In the table the first column is the source number, the second column abbreviates a data quality vector by listing the numbers of the constraints whose value is 0 (not satisfied), and the third column lists the percentage of the data in the source with this data quality vector . The summary of the table is: the  top 50\% of the constraint violations are explained by 8 patterns involving 6 constraints. Analysis of the top 60\% of the histogram data for each source is shown in Table~\ref{tab:top60percent}. The summary of this table is: the top 60\% of violations are explained by 11 patterns involving 11 constraints.

\begin{table}[h!]
\caption{Top 50\% of Violations Explained by 8 Patterns Involving 6 Constraints}
\centering
\begin{tabular}{c r r}
\hline\hline
Source  & Constraints Violated  & Percentage  \\ [0.5ex] % inserts table %heading
1 & 1 & 41 \\
1 & 4 & 6\\
1 & 2 & 4 \\
\hline
2 & 1 & 48\\
2 & 4 & 14\\
\hline
3 & 1 & 38\\
3 & 7 & 5\\
3 & 12 & 5\\
3 & 1, 12 & 4\\
\hline
4 & 1 & 32\\
4 & 6 & 15\\
4 & 1,6 & 8\\
\hline
5 & 1 & 46\\
5 & 4 & 7\\
\hline

\hline
\end{tabular}
\label{tab:top50percent}
\end{table}

\begin{table}[h!]
\caption{Top 60\% of Violations Explained by 11 Patterns Involving 11 Constraints}
\centering
\begin{tabular}{c r r}
\hline\hline
Source  & Constraints Violated  & Percentage  \\ [0.5ex] % inserts table %heading
1 & 1 & 41 \\
1 & 4 & 6\\
1 & 2 & 4 \\
1 & 6 & 3\\
1 & 7 & 3\\
1 & 2,3,5,8,11& 3\\
\hline
2 & 1 & 48\\
2 & 4 & 14\\
\hline
3 & 1 & 38\\
3 & 7 & 5\\
3 & 12 & 5\\
3 & 1, 12 & 4\\
3 & 2 & 4\\
3 & 13 & 4\\
3 & 4 & 3\\
\hline
4 & 1 & 32\\
4 & 6 & 15\\
4 & 1,6 & 8\\
4 & 4 & 6\\
\hline
5 & 1 & 46\\
5 & 4 & 7\\
5 & 2 & 4\\
5 & 1,4 & 3\\
5 & 7 & 3\\
\hline

\hline
\end{tabular}
\label{tab:top60percent}
\end{table}

\subsubsection {Product Coefficient Representation and Visualization}

The product coefficient parameter representation provides more subtle information than the histogram and provides a set of statistics that could be used for multiscale statistical hypothesis testing (e.g. to determine at what scales there is a statistically significant difference between the data sources) and for statistical prediction. For each node $n$ in the binary tree, there is a product coefficient $a_n$. Assume the root node is at level 0 in the tree. For a node at level $i -1, i >= 1$, The path label  $L(n)$ for the node specifies a vector of values for the first $i -1$ data quality constraints and determines a set of data  which satisfies these constraint values. Let $C_{i} $ denote the $i$th constraint. This constraint splits the  set into two parts: the left part consists of data items which violate the $i^{th}$ constraint; the right part consists of data items
  which satisfy the $i$th constraint. The product coefficient for the node is  the difference between their relative proportions; It is a skewness measure. It is negative if more of the data set falls into the right half and positive if more of the data falls into the left half. Equivalently, it is  the difference between two conditional probabilities:

\begin{equation}
a_n = Pr(\neg C_{i}  | L(n)) - Pr(C_{i} | L(n))
\end {equation}

There are  a large number of product coefficients $2^{31} - 1$, since there 30 levels (and almost  all of them have the value 0 because there is no data in the corresponding constraint set intersection. They can be used to compare the data quality measure with any other measure on a universe with a binary tree structure. They can be used as features for decision algorithms; and a histogram of their values can be summarized. They can also be visualized using a daywheel figure. The daywheel visualizations of the  product coefficients for the first 12 levels are shown in Figures \ref{fig:source1} and \ref{fig:sources2to5}.
A daywheel visualizes the scale 0 product coefficient in the center (level 0) , the two scale 1 product coefficients in two halves of the first concentric ring, the four scale 2 product coefficients in four quarters of the second concentric ring, and the $2^{ith}$ product coefficients in the $i^{th}$ ring. The parent child structure of the binary tree is reflected in the parent child relationship between the divisions of neighboring concentric rings. The value of each product coefficient is color coded.  If the node set associated with the product coefficient is empty (i.e. there are no constraint violations) it is colored green. Red indicates a product coefficient value of -1, which means "skewed to violations"; purple indicates a product coefficient value of 0, which means the number of violations equals the number of non-violations; blue indicates a product coefficient vaue of -1 which indicates "skewed to non-violations". These three colors were encoded as the color vectors: red = $[1,0,0]$, blue = $[0,0,1]$, and purple = $[1,0,1]$. The product coefficient values were displayed by convex interpolation between these colors. The "sea of green" in each of the figures visualizes the large number of constraints set intersections which contain no data items. The tree structure of the green area corresponds to the fact that subsets of empty sets are empty. The red area visualizes the sets where there are relatively more violations than non-violations for the next constraint and the blue area visualizes the sets where there are relatively more non-violations than violations for the next constraint. The rays of red going out to the boundary show where the constraint violations are concentrated as the constraints are added one by one. Visually, Source 3 is most similar to the composite source, Source 1. 

\begin{figure}[h!]
\begin{center}
\includegraphics{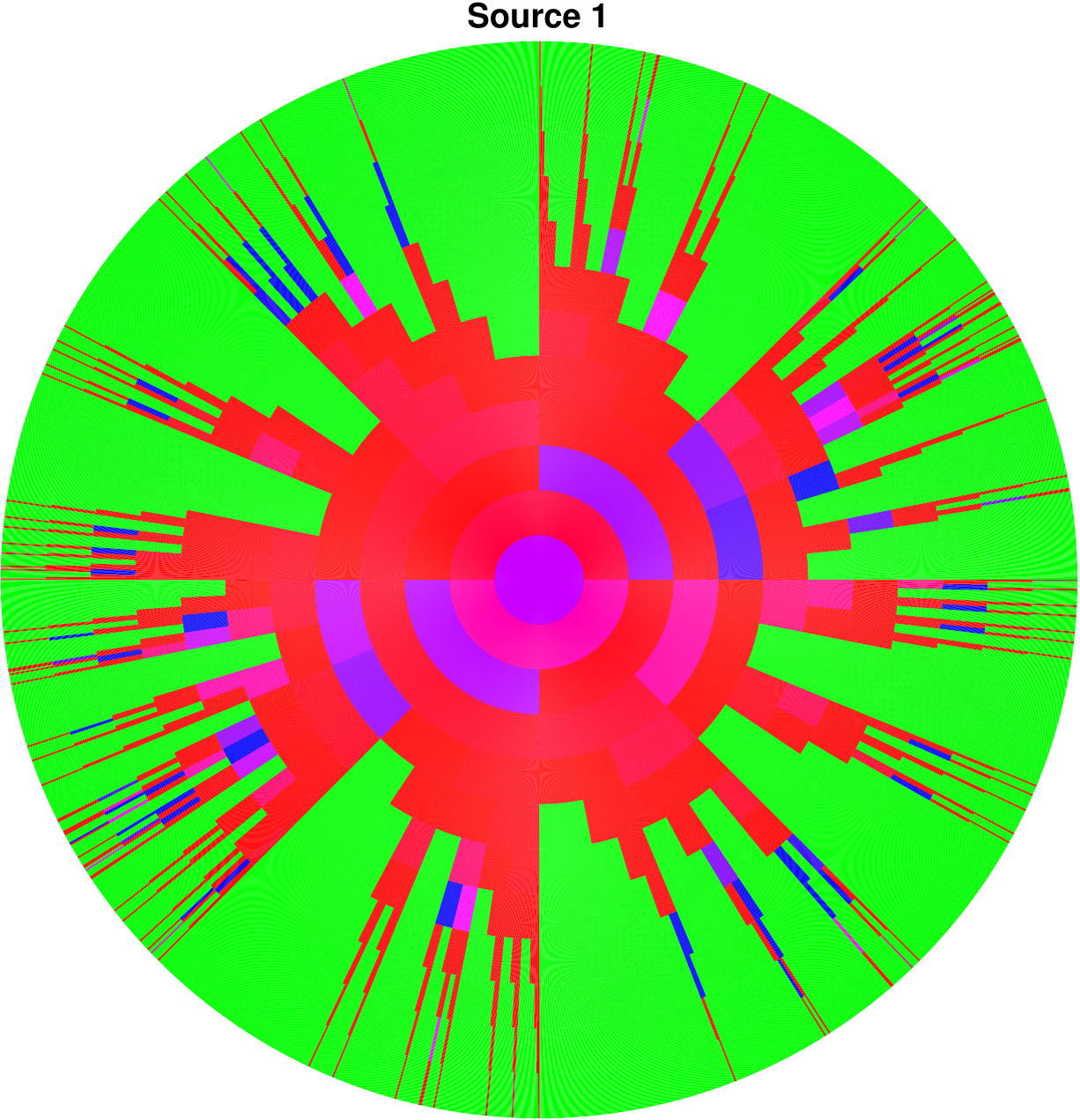}
\caption{Visualization of First 12 Constraints for Source 1 -- the Composite Source}
\label{fig:source1}
\end{center}
\end{figure}

\begin{figure}
    \centering
    \begin{subfigure}[b]{0.3\textwidth}
        \includegraphics[width=\textwidth]{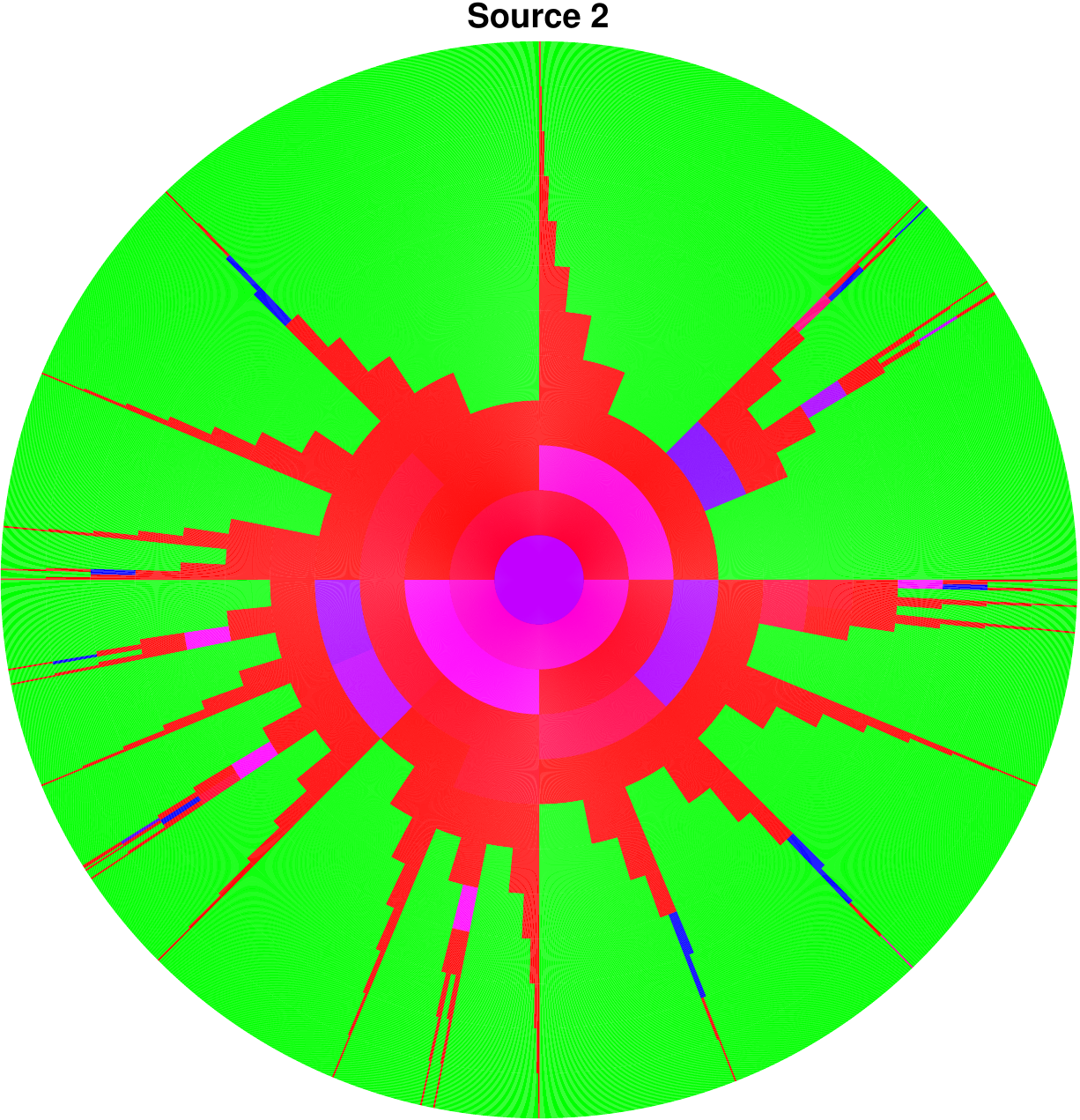}
        \caption{Source 2}
    \end{subfigure}
    \quad
    \begin{subfigure}[b]{0.3\textwidth}
        \includegraphics[width=\textwidth]{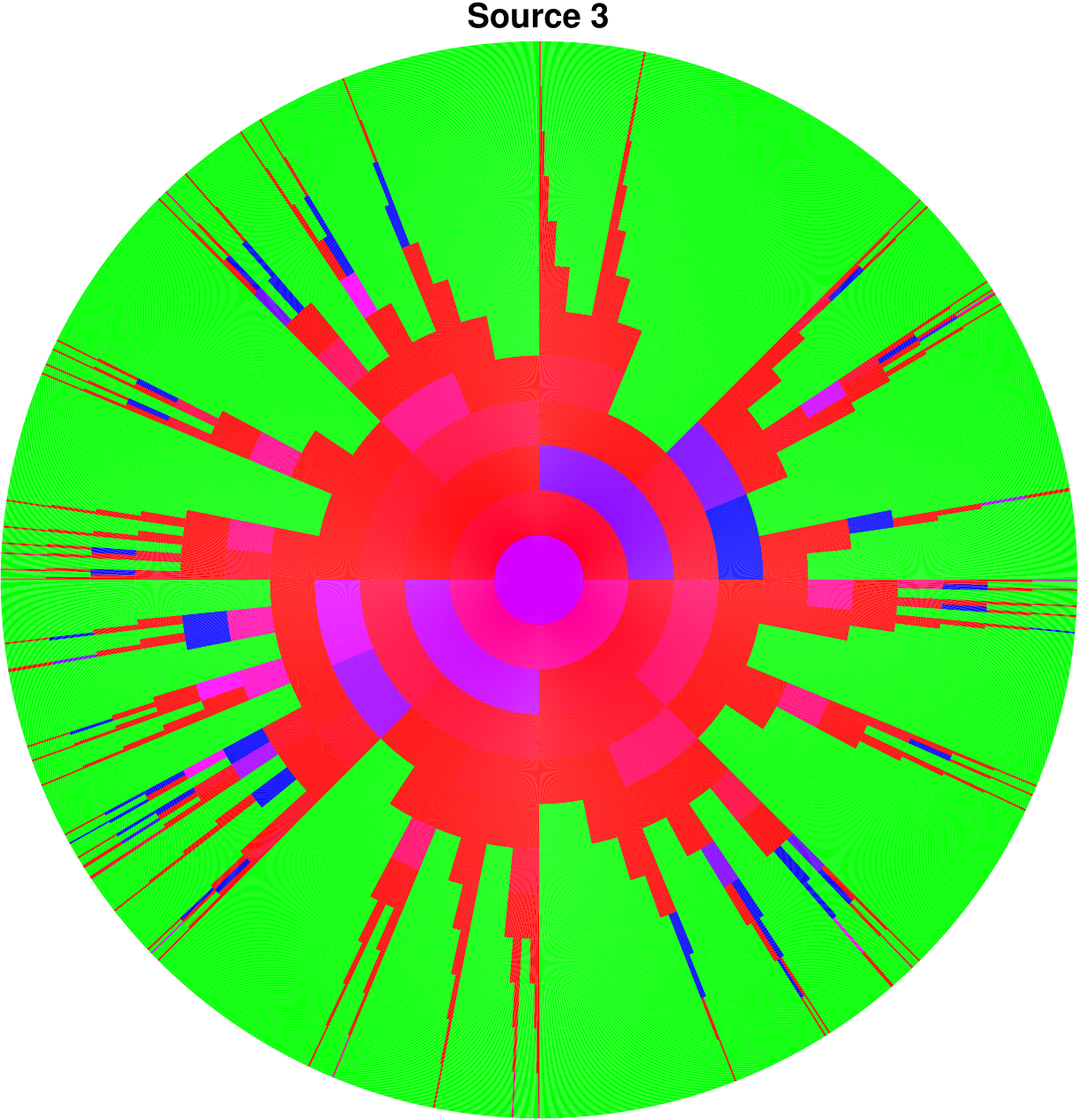}
        \caption{Source 3}
    \end{subfigure}

    \begin{subfigure}[b]{0.3\textwidth}
        \includegraphics[width=\textwidth]{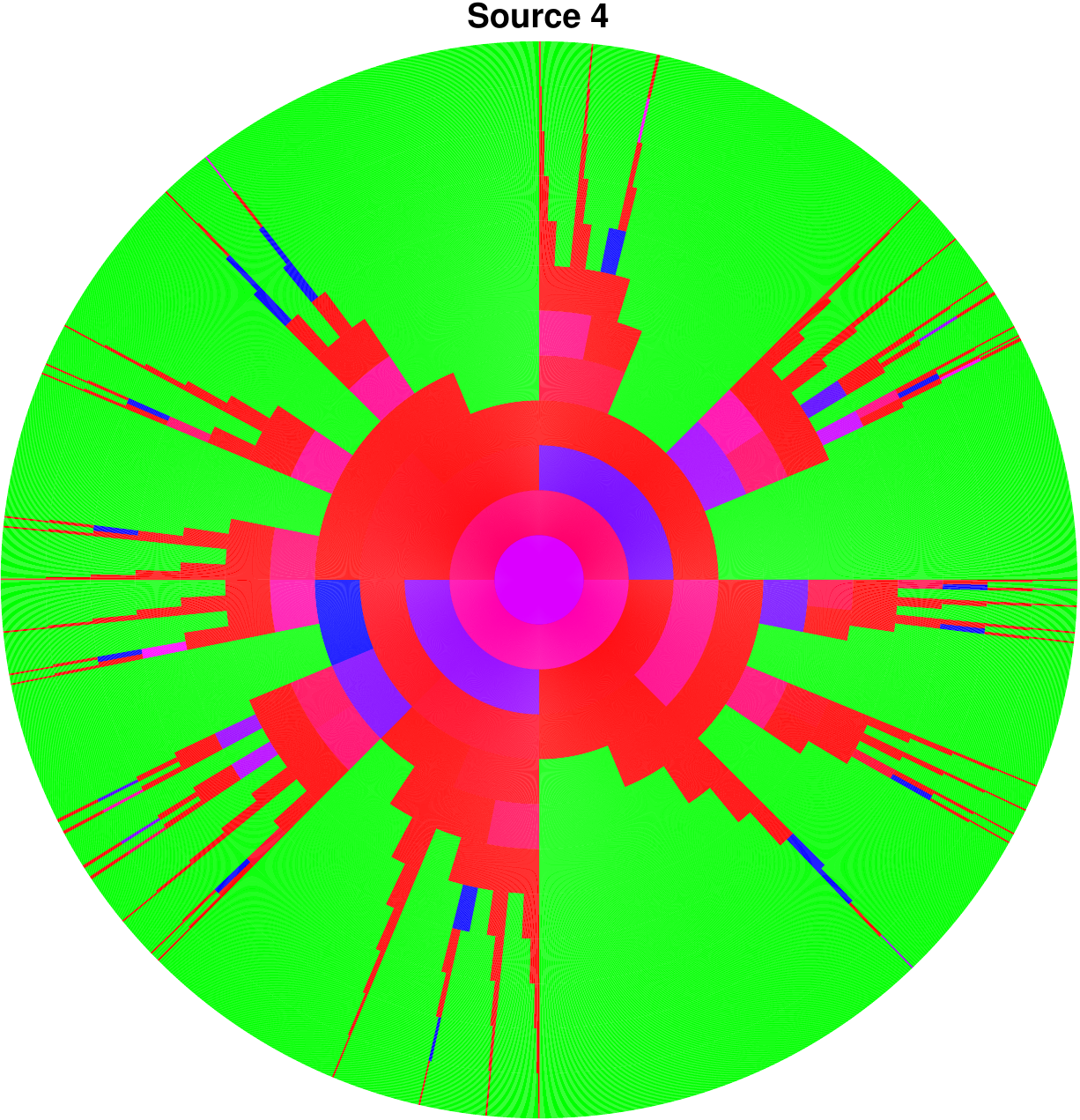}
        \caption{Source 4}
    \end{subfigure}
    \quad
    \begin{subfigure}[b]{0.3\textwidth}
        \includegraphics[width=\textwidth]{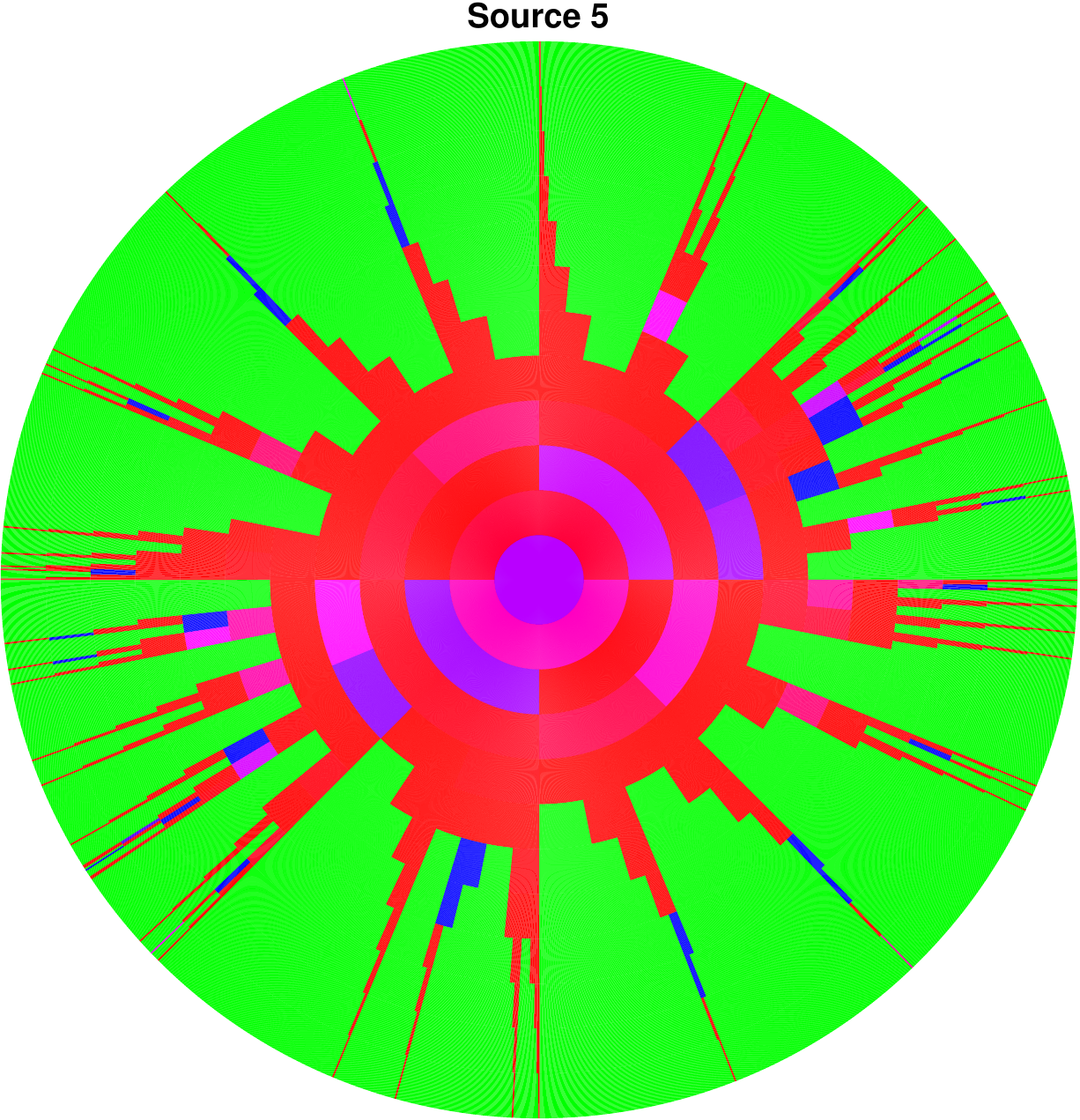}
        \caption{Source 5}
    \end{subfigure}
    \caption{Comparative Visualization of the First 12 Constraint Sets for Individual Sources}
    \label{fig:sources2to5}
\end{figure}

\subsection{Betti Numbers for the Data Quality Data}
The homological dimensions of the nerve simplicial complex $\mathcal{N}_{\mathcal{C_0}}$ determined by family $\mathcal{F} = \{C_1, ....., C_{30}\}$ of the 30 constraint violation sets were computed. Recall that a constraint violation set consists of the set of data items which violate the constraint. The computation of the betti numbers was done  by the author using the open source Computational Homology Project software (CHomP)\cite{HaMi2014}\cite{chomp}. 

\begin{table}[ht]
\caption{Homological Dimension Statistics for the Data Quality Sources - Betti numbers}
\centering
\begin{tabular}{c r r r r r r r r r r }
\hline\hline
Source Number & $\beta_0$ & $\beta_1$ &$ \beta_2$ &$ \beta_3$ & $\beta_4$ & $\beta_5$ & $\beta_6$ & $\beta_7$ & $\beta_8$ & $\beta_9$ \\ [0.5ex] % inserts table %heading
\hline
\hline
1 & 2 & 2 &  7 & 0 & 0 & 0 & 0 & 0 & 0 & 0\\
2 &4 & 2 & 0 & 0 & 0 & 0 & 0 & 0 \\
3 & 2 & 8 & 3 &  0 & 0 & 0 & 0 & 0 & 0 & 0\\
4 & 2 & 1 & 2 & 0 & 0 & 0 & 0\\
5 & 1 & 2 & 0 & 0 & 0 & 0 & 0 & 0 & 0\\
\end{tabular}
\label{tab:bettisummary}
\end{table}

The number of homological dimensions for each source is the dimension of the largest face, or equivalently, the maximum number of constraints which were mutually violated in the source data, or the maximum number of zeros in a binary feature vector  representing the source data. This can easily be checked directly  from the binary feature representation data. This varied from set to set. Table ~\ref{tab:bettisummary} shows that the composite source (Source1) and Source 3 each had 10 simplicial dimensions. Sources 2, 4, and 5 had 8,7, and 9 simplicial dimensions respectively. The betti numbers summarize the simplicial geometry of the nerve complex for each source determined by the data quality constraint features. Only the first three betti numbers were non-zero for some source. The $0{th}$ betti number indicates the number of connected components. Sources 1, 3 and 4 each had two connected components; source 2 had 4 connected components and source 5 had only one connected components. The second source had only two non-zero betti numbers. The first betti number indicates the number of independent groups of 1 dimensional faces describing edges of triangles which are unrealized as two-dimensional simplices; the first betti numbers for sources 1, ..., 5 are 2, 2, 8, 1 and 2. The second betti number indicates the number of independent groups of 2 dimensional faces corresponding to faces of tetrahedron which are unrealized as three-dimensional simplices; the second betti numbers for sources 1, ..., 5 are 7, 0, 3, 2, and 0 respectively. Already the second betti number indicates relatively complex simplicial geometry exists for these constraint violation features sets. 
The betti numbers are a new type of integer-valued statistics that can be computed for binary feature data. In this case, Table ~\ref{tab:bettisummary} shows that these statistics distinguish the source data sets. 

\section{Related Work and some Additional Research Directions}

\subsection{Measures on Tree Structured Spaces}

The dyadic product formula representation was first made explicit  for the unit interval in the 1991 Annals of Math paper ``The Theory of Weights and the Dirichlet Problem for Elliptic Equations'' authored by R. Fefferman, C. Kenig and J. Pipher \cite{FeKePi91}  in Section 3.18, Lemma 3.20. (Weights are positive functions.) In this paper the authors were trying to prove that certain weights (``harmonic measures'') arising in elliptic PDE lie in the Coifman Fefferman class $A_\infty$. They noted that the $A_\infty$ condition holds if and only if the measure is doubling and a certain $L^2$ condition is satisfied for the coefficients. They used it to construct an explicit example of a doubling measure on the unit interval, which provided a counterexample necessary to a  proof of one of their main theorems. The restricted set of dyadic measures that they used is very far from the case of general measures and in particular the $L^2$ condition they studied does not hold for multifractal measures which typically arise in for many real data sets.
 In \cite{BaNeJoSh16} the author and collaborators realized the result could be re-formulated for dyadic measures on sets with binary tree structures (not just the unit interval) and used to provide an algorithmizable theoretically-based  method for representing finite data samples from universes with a binary tree structure as vectors of product coefficient parameters of measures. They exploited the fact that the weak star limit result holds for the much more general class of non-negative dyadic measures.  In the paper, they also formulated a visualization theorem and a multscale noise theorem for strictly positive measures that were re-formulations of deep results. Their visualization theorem exploited deep results due to Beurling and Ahlfors \cite{BeurlingAhlfors56} and Ahlfors \cite{Ah66} from the the theory of quasi-conformal mapping. The results enable construction of a Jordan plane curve (a welding curve) from a measure on the unit circle satisfying mild constraints on its product coefficient parameters and characterize its uniqueness. They exploited the multiplicative model of chaos defined by Kahane\cite{Ka85} \cite{RhVa14} to define a multi
scale noise model for dyadic measures and exploited the analysis in Kahane's proof to obtain noisy measures with finite, non-zero volume. 
The noise model is related to Brownian motion. Recent work by Grebenkov, Beliaev and Jones \cite{GrBeJo} provides an exposition of L\'{e}vy's formulation of Brownian motion in terms of explicit dyadic multi scale formalisms. They revise  L\'{e}vy's construction of Brownian motion to operate with various Gaussian processes. A Brownian path is explicitly constructed as a linear combination of dyadic ``geometrical features'' at multiple length scales with random weights. Such a representation gives a closed formula mapping of the unit interval onto the functional space of Brownian paths.

In \cite{KoNo04}  Kolaczyk and Nowak developed a systematic approach to multiscale probability models. They showed that multiscale factorizations, similar to Lemma 3.20 in \cite{FeKePi91}  and the refomulation of it for dyadic measures on binary tree structured spaces, Lemma 2.1 in \cite{BaNeJoSh16}, arise when conditions for a "multiresolutionanalysis (MRA)" of likelihoods are satisfied  and shown that these conditions characterize the Gaussian, Poisson and multinomial models.  They also quantified the risk behavior of certain nonparametric, complexity penalized likelihood estimators based on their factorizations. They focused on binary tree structures. Our approach does not rely on such assumptions and we do not not provide theoretical results for risk estimation. 

A natural question to ask is: does the representation lemma for dyadic measures discussed in Section \ref{dyadicmeasures} and proven in \cite{BaNeJoSh16} and \cite{FeKePi91} generalize to measures on sets with a general tree structures?  We provide one answer in the next Lemma.  Fix notions by defining a tree structure on a set $X$ that consists of a tree $\mathcal{T}$, a mapping $\mathcal{S}: nodes(\mathcal{T}) \rightarrow 2^X$ from nodes to subsets of $X$ and constraints on the mapping: $\mathcal{S}(root) = X$ and the image of a parent node is the disjoint union of the images of its child nodes i.e.  $  S(n) = \cup_{c: child(n)} S(c)$

\begin{lem} [Representation Lemma for Measures on Tree Structured Sets]
Let $ (X, \mathcal{T}, \mathcal{S})$ denote a tree-structured set. Let $\nu$, $\mu$ denote strictly positive and non-negative measures, respectively, on the sigma algebra $\Sigma(\mathcal{S})$ generated by the sets in the image of $\mathcal{S}$. For a non-root node $n \in \mathcal{T}$ let $r >= p >= n$ denote the set of nodes $p$ on the path from $n$ to the root node $r$ ordered by the parent relationship. Let $a_n$ be the parameter uniquely defined by 
\begin{equation}
\mu(S(n)) = (1 + a_n) \frac{\nu(S(n))}{\nu(S(parent(n)))} \mu(S(parent(n)))
\end{equation}
if $\mu(S(parent(n))) \neq 0$. If $\mu(S(parent(n))) = 0$ define $a_n = 0$. 
\begin{itemize}
\item $\nu(S(n)) = \nu(X) \prod_{r >  p >= n} {\frac{\nu(S(p))}{\nu(S(parent(p))}}$
\item $\mu(S(n)) = \mu(X) \prod_{r > p >= n} { (1 + a_p) \frac{\nu(S(p))}{\nu(S(parent(p))}}$
\item for a non-leaf node $p$, the function $f: \{S(c): c \in child(p)\} \rightarrow \mathbb{R} , f(S(c)) = a_c \cdot \nu(S(c))\}$ is orthogonal to the functions which are constant on the $\{S(c): c \in child(p)\}$. 
\begin{equation}
0 = \sum_{c: child(p)} {a_c \cdot \nu(S(c))}
\end{equation}
Thus, relative to a choice of a multi-resolution basis for functions on tree consisting of the parent node $n$ and its child nodes, the function $f$ can be expressed as a unique linear combination of $card(\{S(c): c \in child(p)\}) - 1$ basis functions. For binary trees, the function $h_p$ which is 1 on the left child node $c_L$ and -1 on the right child node $c_R$  is a basis function, and the linear combination is $a_{c_L} \cdot h_p$.  
\item   $S$ maps the set of nodes at each level $i$ in the tree to disjoint partitions $\mathcal{P}_i$ of $X$. The partition at level  $i+1$ refines the partition at level $i$. Let $\nu_i$ and $\mu_i$ denote the measures on $\Sigma(\mathcal{P}_i)$, the sigma algebra generated by the sets in $\mathcal{P}_i$, determined by the first two path formulas. The the weak star limit of $\nu_i$ and $\mu_i$ exists.  
\item $-1 <= a_n < \frac{\nu(S(parent(n))) } {\nu(S(n))} - 1 $
\end{itemize}
\end{lem}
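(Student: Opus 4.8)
The plan is to dispatch the five bullets in increasing order of difficulty: the first three are elementary manipulations of the defining relation for $a_n$, the bound in the last bullet is a monotonicity estimate, and the weak-star statement is the only one requiring the analytic machinery of \cite{BaNeJoSh16} and \cite{FeKePi91}. For the first bullet, write the path from $n$ to the root $r$ as $n = n_0,\ n_1 = \mathrm{parent}(n_0),\ \dots,\ n_k = r$; the product $\prod_{r > p \ge n} \nu(S(p))/\nu(S(\mathrm{parent}(p)))$ runs over $p = n_0, \dots, n_{k-1}$ and telescopes to $\nu(S(n))/\nu(X)$, every denominator being nonzero since $\nu$ is strictly positive. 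For the second bullet I would induct on the depth of $n$: if $\mu(S(\mathrm{parent}(n))) \neq 0$ the defining relation supplies exactly the new factor $(1+a_n)\nu(S(n))/\nu(S(\mathrm{parent}(n)))$; if $\mu(S(\mathrm{parent}(n))) = 0$ then $a_n = 0$ by convention, $\mu(S(n)) = 0$ since $S(n)\subseteq S(\mathrm{parent}(n))$, and the right-hand side vanishes too because the highest $\mu$-null ancestor $q$ of $n$ contributes the factor $1 + a_q = \mu(S(q))\nu(S(\mathrm{parent}(q)))/\big(\nu(S(q))\mu(S(\mathrm{parent}(q)))\big) = 0$ (the case $\mu(X)=0$ being trivial).

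For the third bullet, the child images $\{S(c): c\in \mathrm{child}(p)\}$ partition $S(p)$, so $\sum_c \mu(S(c)) = \mu(S(p))$ and $\sum_c \nu(S(c)) = \nu(S(p))$. When $\mu(S(p))\neq 0$, substitute $\mu(S(c)) = (1+a_c)\nu(S(c))\mu(S(p))/\nu(S(p))$ into the first identity, cancel $\mu(S(p))/\nu(S(p))$, and subtract $\sum_c \nu(S(c)) = \nu(S(p))$; what is left is $\sum_c a_c\nu(S(c)) = 0$. When $\mu(S(p)) = 0$ every $a_c = 0$ and the identity is trivial. The vector $f(S(c)) = a_c\nu(S(c))$ is thus orthogonal to the constants inside the $\mathrm{card}(\mathrm{child}(p))$-dimensional function space on the child images under the counting pairing $\langle g,h\rangle = \sum_c g(S(c))h(S(c))$, hence lies in its $(\mathrm{card}(\mathrm{child}(p))-1)$-dimensional orthogonal complement and is a unique linear combination of any basis of that complement; for a binary split, $a_{c_L}\nu(S(c_L)) = -a_{c_R}\nu(S(c_R))$ forces $f$ to be a single multiple of $h_p$, the claimed Haar-like term (up to the normalization in which the dyadic case has $\nu(S(c_L)) = \nu(S(c_R))$). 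For the fifth bullet, rearrange the defining relation (when $\mu(S(\mathrm{parent}(n)))\neq 0$) to $1 + a_n = \frac{\mu(S(n))}{\mu(S(\mathrm{parent}(n)))}\cdot\frac{\nu(S(\mathrm{parent}(n)))}{\nu(S(n))}$: the first factor lies in $[0,1]$ by non-negativity and monotonicity of $\mu$, the second is finite and positive since $\nu > 0$, so $-1 \le a_n \le \nu(S(\mathrm{parent}(n)))/\nu(S(n)) - 1$, and the same bounds hold when $a_n = 0$ is forced. I would then note that the right inequality is strict exactly when $\mu(S(n)) < \mu(S(\mathrm{parent}(n)))$ --- e.g.\ whenever a sibling of $n$ carries positive $\mu$-mass --- and would either add that non-degeneracy hypothesis or relax the stated bound to $\le$, since equality does occur when all of the $\mu$-mass of $S(\mathrm{parent}(n))$ flows through $S(n)$.

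The main obstacle is the fourth bullet, the existence of the weak-star limit. A short induction shows each $\mathcal{P}_i$ is a partition of $X$ refining $\mathcal{P}_{i-1}$, so the $\sigma$-algebras $\Sigma(\mathcal{P}_i)$ form an increasing filtration; additivity over children (guaranteed by the second-bullet path formula) makes the sequences $\nu_i$ and $\mu_i$ consistent, i.e.\ each restricts on $\Sigma(\mathcal{P}_i)$ to the previous one. The work is to name the compact space in which convergence is asserted: following \cite{BaNeJoSh16} and \cite{FeKePi91} I would pass to the boundary (profinite) completion $\widehat{X}$ of the tree --- a Cantor-type space for binary trees --- in which the cylinders $S(n)$ are clopen and continuous functions are uniform limits of $\mathcal{P}_i$-measurable step functions. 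Consistency then makes $\int g\,d\mu_i$ eventually constant for step $g$ and hence Cauchy for continuous $g$, and together with the uniform mass bound $\mu_i(\widehat X) = \mu(X)$ and Banach--Alaoglu this produces a weak-star limit that agrees with $\mu$ on every cylinder, hence is the push-forward of $\mu$; the same argument handles $\nu$. The genuinely delicate part is not the estimate but the set-up --- specifying $\widehat X$ and its topology when the tree has infinite depth or unbounded branching, and checking that the cylinder $\sigma$-algebra is fine enough for weak-star convergence to characterize the limit --- which is precisely the content the cited papers establish in the dyadic-interval case and that must be transcribed to the general tree-structured setting.
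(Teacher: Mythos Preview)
Your proof follows the paper's own argument almost point-for-point: telescoping for bullet one, induction on path length for bullet two, additivity plus substitution of the defining relation for bullet three, the ratio rearrangement for bullet five, and for bullet four the paper simply cites \cite{BaNeJoSh16} and \cite{FeKePi91} with the remark that the key point is $\nu_i(X)$ and $\mu_i(X)$ being constant in $i$ --- your profinite-completion/Banach--Alaoglu sketch is just an explicit unpacking of that. Your observation that the upper bound in bullet five should be $\le$ rather than $<$ is well-taken: the paper's own proof establishes only the non-strict inequality, with equality occurring precisely when $\mu(S(n)) = \mu(S(\mathrm{parent}(n)))$.
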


\begin{proof}
The first statement, the path formula for $\nu$, is trivially true by telescoping cancellation. Informally, it just says that the $\nu(S(n))$ equals  $\nu(X)$ multiplied by the conditional probabilities determined by a path from the root node to $n$.  None of the denominators in the conditional probabilities are zero because $\nu$ is strictly positive. The second statement, the path formula for $mu$, is proved by induction on the length of the path. It is true for a path of length 1 beginning at the root, by the definition of the parameter in the statement of the lemma. The induction step just substitutes the induction hypothesis for  $\mu(S(parent(n)))$. The third statement is clearly true if $\mu(S(p)) = 0 $ because then the parameters $a_c$ for the child nodes are all 0. If $\mu(S(p)) \neq 0 $
the third statement is proved by noting that for a non-leaf node $\mu(S(p)) = \sum_{c: child(p)} {\mu(c))}$ because measures are additive on disjoint sets and the tree structure definition requires that the disjoint union of the set images of child nodes equals the set image of the parent node. Substituting the path formula for $\mu$ into the right side,  expanding, and factoring out $\mu(S(p))$  gives
\begin {equation}
\mu(S(p)) = \mu(S(p)) \cdot (\sum_{c: child(p)} {\frac{\nu(S(c))}{\nu(S(p))} } + \sum_{c: child(S(p))} {a_p \cdot \frac{\nu(S(c))}{\nu(S(p))} } )
\end{equation}
The first sum term in parentheses is 1 because the sum of the $\nu$ conditional probabilities of the child nodes equals 1. The the second sum term in parentheses equals 0. Multiple by $\nu(p)$ to obtain the third statement. The first two sentences in the fourth statement are implied by the definition of tree structure. The fourth statement claim that the weak star limit exists is proved using the same method as used for  Lemma 2.1 in \cite{BaNeJoSh16} and Lemma 3.20 in  \cite{FeKePi91}. The key point is that $\nu_i(X) $ and $\mu_i(X)$ are constant for all levels $i$. The fifth statement is true if $\mu(S(parent(n))) = 0$ since then $a_n = 0$ by definition. If $\mu(S(parent(n))) \neq 0$, the definition can be rewritten as: 
\begin{equation}
 \frac{\mu(S(n))}{ \mu(S(parent(n)))} \cdot \frac{\nu(S(parent(n))) } {\nu(S(n))} = 1 + a_n
\end{equation}

Since $\nu$ is strictly positive and $\mu$ is non-negative the left side of the equation is non-negative, so $1 + a_n >= 0$ implying $a_n >= -1 $, with $a_n = -1$ only if $mu(S(n)) = 0$ and $\mu(S(parent(n))) \neq 0$. If $\mu(S(n)) > 0$ and $\mu(S(parent(n))) \neq 0$, $1 + a_n$ is biggest when $\mu(S(n)) = \mu(S(parent(n))$ (making the first factor in the equation equal to 1) so the result follows. If the tree is a regular k-adic tree and $\nu$ is the naive measure($\nu(X) = 1$ and $\nu(c) = \frac {1}{|child\; nodes|}$  $\frac{\nu(S(parent(n))) } {\nu(S(n))}$ is constant for all nodes and equals $k$ so $a_n = k - 1$ and is independent of $n$. This agrees with the dyadic theory where $k = 2$. 
\end{proof}

Statements 3 and 5  in the lemma shows that the parameter space for general tree measures is much more complex and depends node by node on the measure $\nu$. Statement 5 explains why the parameter space for dyadic measures on binary trees is simpler and easily related to the Haar-like basis. While it is easy to define a dyadic version of Wasserstein distance between dyadic measures,  definition of a canonical Wasserstein-like distance between measures on tree-structured sets appears to be a research issue. Perhaps the tree distance theory developed by Billera, Holmes and Vogtman \cite{BiHoVo2001} could be exploited. 

\subsection {Comparison with Multi-Resolution Theory}

Meng in \cite{ Meng2014}  has proposed the statistical issues posed by multi-resolution  as an important statistical research area.  Here we focus on a comparison with multi-resolution theory as recently described in a recent paper by Gavish, Nadler and Coifman \cite{GaNaCo}.
Gavish, Nadler and Coifman focused on developing a multi-resolution theory for functions on discrete data sets given a hierarchical tree structure.  Their point of view is multi-resolution analysis of function spaces and development of a theory for sparsity estimation. They use the given hierarchical structure to construct a measure determined by the data, similar to our approach. The nodes of the tree determine subsets of the data sets which are descendants of the nodes and the measure for each node set (which they call a folder) is the proportion of the data descended from the node. It is the normalization into a probability measure of the counting measure determined by the node. They use this measure to construct a tree metric. The distance between two data points is the measure of the smallest node containing both as descendants.  
 They also exploit the hierarchical tree structure to construct a wavelet-like orthonormal basis for the functions on the data set. The levels of the hierarchy determine the multi-resolution structure of  functions on the data. 
They then formulate a function smoothness criterion defined in terms of the tree metric determined from the probability measure and prove that function smoothness could be measured by the rate of function coefficient decay, which they  use to quantify the smoothness of a data set.  They showed exponential decay for tree structures which are balanced relative to the data and use it to prove existence of sparse approximations so only a few coefficients of a function (e.g. a classification function on the data) would have to be computed. A balanced tree structure implies that the data is "everywhere"  relative to the different levels of the tree structure, i.e. that the counting probability measure is strictly positive for all nodes in a bounded sense. The fast coefficient decay is used to prove existence of sparse approximations. 

Our approach would view the data as non-negative (not strictly positive) measure on a hierarchical tree structure. However, the hierarchical structure can be arbitrary and need not be determined by the data, but in practice is a natural one for the universe from which the data is sampled. In the paper we focus on using binary tree structures determined by binary feature sets appropriate for an application or set of data sample. The theory shows that such a measure is uniquely characterized by $a_n$ parameters (one for each node in the tree). The parameters determine a function approximating the data, which is a "radon nikodym derivative"  relative to a strictly positive measure which assumes even distribution among the children of a node. This is analogous to a Bayesian assumption. We would use the canonical parameters (the $a_n$ parameters) for the measure as features  characterizing the data set. The support of the representing measure is typically sparse because real data sets tend to be concentrated in parts of the tree so only a very small percentage of the parameters are needed to characterize the data set. Our experience shows that real data sets are not balanced with respect to natural hierarchical tree structures  and the measures determined by data sets are non-negative not positive (as required for the balanced assumption and as is traditionally assumed in statistics). We characterize sparsity by characterizing the support of the measure representing the data set in two ways - the scale and locality of the node sets in the tree with positive measure and the betti numbers for the simplicial complex representing the support of the measure. The characterization of the support provides a privacy property for samples of data with the same support.
Instead of constructing sparse representation of classifications functions, using a priori sparsity estimates, our approach is to use the parameters of the characterizing measure and the betti numbers as automatically generated features for the data so we can then we use traditional algorithms for decision making, e.g. decision trees \cite{ Ness16} exploiting the hierarchical structure of the parameters. 
The data quality examples informally illustrates that both the measure parameters and the betti numbers were sufficient to distinguish the data sets, illustrating their utility for semi-supervised learning. An additional relationship with multi-resolution theory is shown in  Statement 3 of the Representation Lemma for Measures on Tree Structured Sets.   Statement 3 shows that the volume function for the $a_c$ parameters for  children of a node $n$ can be represented relative to a chosen multi-resolution basis for the simple parent-child tree. Choices of these bases for each node (or each type of node) will result in a product formula similar to the product formula  similar to Lemma 3.20 in \cite{FeKePi91}  and Formula \ref{dyadicproductformula}. While for a general tree structure there is no single canonical basis, for the binary tree case only one basis element is needed so there effectively is a canonical choice as illustrated in Section \ref{pcs}.

\subsection {Computational Topology} 

Computational topology research is a very active area. One survey of the area and its research results is \cite{EdHa}. An example of research research by statisticians in the area is \cite{FaLe14}. It has 
developed the multiscale theory of persistent homology and a number of publicly available algorithms for computing it.  A key first step is to represent the data as a simplicial complex (e.g. the RIPS complex for metric space data). The resulting persistent homology dimensions are then statistics which can be computed algorithmically from the representation, represented using diagrams such as the persistence diagram  and analyzed further using special purpose statistics developed for persistence diagrams.  

We defined a variant of the fundamental nerve simplicial complex defined using a dyadic measure and its binary features, used it to represent the support of a dyadic measure, and then exploited basic computational topology algorithms to compute the betti numbers of the simplicial complex. We realized that these betti number statistics provide a type of differential privacy property for samples of the dyadic measure. The simplicial complexes we used have a filtration determined by the order of the binary features, so it would be natural to apply persistent homology algorithms to these filtered simplicial complexes to generate a richer set of  persistent homology statistics, which would also have some refined privacy properties.  Our approach did not require any assumption of metric space properties for the data, which are not always available. It seems possible that this approach would provide a new set of applications for persistent homology and establish more links between data, measure theory,  and topology. 

\subsection {Algebraic Statistics} 

Algebraic statistics\cite{DrStSu} may provide additional useful insights. The representations used here are fundamentally algebraic. For each scale $s$, the Dyadic Product Formula Representation (Lemma 2.1 in \cite{BaNeJoSh16} provides a product formula for the measure of each scale $s$ dyadic interval which is a polynomial of degree $s$ in the product coefficient parameters associated with ancestor nodes  The product coefficient parameters are structured as a tree. The product formula for scale $s+1$ adds a factor with an additional parameter for each leaf node. The information dimensions, entropy and variance for each scale $s$ measure are computed by algebraic formulas. The parameters are in $[-1,1]^{2^{s_1} - 1}$. Algebraic topology was used to characterize the support of the measures. The Bayes rule for computing product coefficient parameters after permuting the order of features is also a rational algebraic formula. 
The Representation Lemma for Measures on Tree Structured Sets implies a similar formula for measures on regular trees with nodes of higher degree. 

\section {Summary}
We proposed a new method for representing data sets for which a set of binary features have been defined which consists of computing both the dyadic set structure determined by an order on the binary features together with the canonical product coefficient parameters for the associated dyadic measure and a variant of a nerve simplicial complex determined by the support of the dyadic measure together with its betti numbers. The product coefficient parameters characterize the relative skewness of the dyadic measure at dyadic scales and localities enabling fusion, statistical prediction and multiscale hypothesis testing. The more abstract betti number statistics summarize the simplicial geometry of the support of the measure and satisfy a differential privacy property. Both types of statistics can be computed algorithmically from the binary feature representation of the data. This new representation provides a new theoretically-based method for pre-processing  data -- permitting new theoretically-based methods of  analysis -- e.g. new methods for statistical fusion, decision-making, inference, hypothesis testing and visualization. We illustrated the methods on a a data quality data set. 

We also proved that dyadic sets with dyadic measures have a canonical set of binary features and determine canonical nerve simplicial complexes. We compared our methods with some other results for measures on sets with tree structures, recent multi-resolution theory, and persistent homology and suggested links to differential privacy, Bayesian reasoning and algebraic statistics. More theoretically-based methods for representation of data will enable a richer set of methods for reasoning about data.  With such pre-processing methods, data sets  are transformed into examples of well-known mathematical structures enabling reproducible mathematical and statistical reasoning, which can be more easily compared and validated.

 \appendix

\section {Mutual Constraint Violations by Source}
The data in the tables in this appendix  is sufficient to reproduce the data quality analysis described in the paper. The input to this analysis were data quality statistics for each source describing the constraint violations for each data item in each source. The sources were the individual sources 2,3,4 and 5 and the composite source 1. The mutual constraint violations view of the data for each of the individual sources is shown in tables. The data for the composite source is a composite of the data for the individual tables, so that data is not shown in a table. The raw input spreadsheet tables were pre-processed into 3 column tables. In each of these tables, there is one row for each unique maximal set of constraints violated by a data element. The rows are listed in decreasing order of violation. The first column lists the number of elements whose maximal violation set is the one listed in the row. The third column lists the numbers of the constraints in the maximal violation set. The second column is the label of the path from the roots of the binary tree to the node corresponding to the maximal set. 
The mutual constraint violations for Source 2 are shown in Table~\ref{tab:source2}. The constraint violations for Source 3 are shown in Table~\ref{tab:source3-part1}, Table~\ref{tab:source3-part2} and Table~\ref{tab:source3-part3}. The constraint violations for Source 4 are shown in Table~\ref{tab:source4-part1} NS Table~\ref{tab:source4-part2}. The constraint violations for Source 5 are shown in Table~\ref{tab:source5-part1} NS Table~\ref{tab:source5-part2}.

\begin{table}[ht]
\caption{Mutual Constraint Violations for Source 2 in Decreasing Order of Violation}
\centering
\begin{tabular}{c l l}
\hline\hline
Number & Binary ID for Violation Set & IDs of Violated Constraints \\ [0.5ex] % inserts table %heading

\hline
257 &   011111111111111111111111111111 &    1 \\
    72 &   111011111111111111111111111111 &     4  \\
    38 &   101111111111111111111111111111 &   2 \\
    19 &   001111111111111111111111111111 &   1,  2 \\
    18 &   011011111111111111111111111111 &   1, 4  \\
    15 &    111111110011111111111111111111 &    9, 10 \\
    14 &   100111111111111111111111111111 &    2, 3 \\
    11 &   100101111111111111111111111111 &    2, 3, 5 \\
    10 &    100101101101111111111111111111 &   2, 3, 5, 8, 11 \\
    9 &   011111111111101111111111111111 &    1, 14 \\
     5 &    111111111111110111111111111111 &   15 \\
    4 &   000101101101111111111111111111 &    1, 2, 3, 5, 8, 11\\
    4 &   000101111111111111111111111111 &    1, 2, 3, 5 \\
     4 &   110111110011111111111111111111 &   3, 9, 10 \\
    4 &   111110111111111111111111111111 &   6 \\
    4 &   111111111111101111111111111111 &   4\\
     3 &    000101101111111111111111111111 &     1, 2, 3, 5, 8 \\
    2 &   000111111111111111111111111111 &   1, 2, 3\\
     2 &  011110111111111111111111111111 &   1,6 \\
    2   &  011111110011111111111111111111 &    1, 9, 10\\
    2 &    011111111111111000111111111111 &   1, 16, 17, 18\\
     2 &   100101101111111111111111111111 &   2, 3, 5, 8 \\
     2 &    101011111111111111111111111111 &    2, 4\\
    2 &    110111110010111111111111111111 &   3, 9, 10, 12 \\
     2 &   111111111111111001111111111111 &   16, 17 \\
    2 &   111111111111111111101111111111 &   20\\
    1 &   000101101101111011111111111111 &   1, 2, 3, 5, 8, 11, 16\\
    1 &   000111110010111111111111111111 &   1, 2, 3, 9, 10, 12\\
    1 &   000111111111101111111111111111 &   1, 2, 3, 14 \\
    1 &    001111111111101111111111111111 &   1, 2, 14\\
    1 &   010111110011111111111111111111 &   1, 3, 9, 10\\
    1 &   011011111111101111111111111111 &   1, 4, 14\\
    1 &   011111111111111110111111111111 &    1, 18\\
    1 &   100001101101111111111111111111 &   2, 3, 4, 5, 8, 11\\
    1 &   100001111111111111111111111111 &   2, 3, 4, 5\\
    1 &   100011111111111111111111111111 &   2, 3, 4\\
    1 &   100101110011111111111111111111 &   2, 3, 5, 9, 10\\
    1 &   100111111111111011111111111111 &  2, 3, 16\\
    1 &   101101101111010111111111111111 &   2, 5, 8, 13, 15 \\
    1 &   101101111111111111111111111111 &    2, 5 \\
    1 &   101111111111111001111111111111 &    2, 16, 17\\
    1 &   110011110011111111111111111111 &   3, 4, 9, 10\\
    1 &   111111011111111111111111111111 &   7\\
     1 &   111111110010111000110101111111 &   9,10,12, 16, 17, 18, 21, 23\\
    1 &  111111110010111111111111111111 &    9, 10, 12\\
    1 &   111111111111011111111111111111  &   13\\
    1 &   111111111111111000111111111111 &    16, 17, 18\\
    1 &   111111111111111111111110111111 &  24\\
\hline
\end{tabular}
\label{tab:source2}
\end{table}

\begin{table}[ht]
\caption{Mutual Constraint Violations for Source 3 in Decreasing Order of Violation Part 1}
\centering
\begin{tabular}{c l l}
\hline\hline
Number & Binary ID for Violation Set & IDs of Violated Constraints \\ [0.5ex] % inserts table %heading
\hline

502  &  011111111111111111111111111111 & 1\\
     66 &   111111011111111111111111111111 &    7\\
    63 &    111111111110111111111111111111 &   12\\
     53  &  011111111110111111111111111111 &  1,12\\
     49 &   101111111111111111111111111111 &   2\\
    48 &   111111111111011111111111111111 &   13\\
    41 &    111011111111111111111111111111 &    4 \\
    33 &   100101101101111111111111111111 &    2, 3, 5, 8, 11\\
    28 &   011111011111111111111111111111 &   1, 7\\
    28 &   111111110011111111111111111111  &  9, 10\\
    22 &  100111111111111111111111111111 &   2, 3\\
    18 &  000101101101111111111111111111 &   1, 2, 3, 5, 8, 11\\
    17 &   011111110011111111111111111111 &   1, 9, 10\\
    16   & 001111111111111111111111111111 &   1, 2 \\
    16 &   011011111111111111111111111111 &    1,4 \\
    16 &    011111111111101111111111111111 &    1,14\\
    15 &    100101111111111111111111111111 &    2, 3, 5\\
    14 &    111011011111111111111111111111 &   4, 7\\
    13 &   000101111111111111111111111111   & 1, 2, 3, 5\\
    13 &    111111111111010111111111111111 &    13, 15\\
    10 &  110111110011111111111111111111 &    3, 9, 10\\
    9 &   011111111111011111111111111111 &    1, 13\\
    8 & 000111111111111111111111111111 &   1, 2, 3 \\
    8 &  111111111110111000111111111111 &    12, 16, 17, 18\\
    7 &    111111011111110111111111111111 &    7,15\\
    6 &  011011011111111111111111111111 &   1, 4, 7\\
    6 &  011111111111111011111111111111 &   1, 16\\
    6 &   101111111110111111111111111111 &    2, 12\\
    6 &  111111111111101111111111111111 &    14\\
    5 &  111011111111011111111111111111 &    4, 13\\
    5 &   111111110011111111011111111111 &    9, 10, 19\\
    5 &   111111111111110111111111111111   & 15\\
    5 &   111111111111111011111111111111 &    16\\
    4 &   011111011110111111111111111111 &   1, 7,12\\
    4 &   100101101111111111111111111111 &  2, 3, 5, 8\\
    4 &   111111011110111111111111111111 &   7, 12\\
    4 &  111111110011111111010101111111 &    9, 10, 19, 21, 23\\
    4 &    111111111110011111111111111111 &   12, 13\\
    4 &    111111111111111111101111111111 &    20\\
    3 &   011011111111111011111111111111 &    1, 4, 16\\
    3 &  011110111111111111111111111111 &   1, 6\\
    3 &   100011111111111111111111111111 &   2, 3, 4\\
    3 &   101111011111111111111111111111 &   2, 7\\
    3 &  110101101101111111111111111111 &    3, 5, 8, 11\\
    3 &   110111111111111111111111111111 &   3\\
    3 &  111011111110111111111111111111 &    4,12\\
    3 &  111111010011111111111111111111 &   7, 9,10\\
    3 &   111111110010111111111111111111 &   9,10,12\\
\hline
\end{tabular}
\label{tab:source3-part1}
\end{table} 

\begin{table}[ht]
\caption{Mutual Constraint Violations for Source 3 in Decreasing Order of Violation Part 2}
\centering
\begin{tabular}{c l l}
\hline\hline
Number & Binary ID for Violation Set & IDs of Violated Constraints \\ [0.5ex] % inserts table %heading
\hline

    2 &   000001101101111111111111111111 &    1, 2, 3, 4, 5, 8,11\\
    2 &   000111111110111111111111111111 &    1, 2, 3, 12\\
    2 &    010111110011111111111111111111 &    1, 3, 9, 10\\
    2 &   011111010011111111111111111111 &    1, 7, 9, 10\\
    2 &   011111110010111111111111111111  &  1, 9, 10, 12\\
    2 &  011111110011111111010101111111 &    1, 9, 10, 19, 21, 23\\
    2 &   011111110011111111011111111111 &    1, 9, 10,19\\
    2 &  011111111110111000111111111111 &    1,12,16,17,18\\
    2 &  100001001101111111111111111111 &   2, 3, 4, 5, 7, 8,11\\
    2 &   100001101101111111111111111111 &   2, 3, 4, 5, 8,11\\
    2 &   100101100001111111010101111111 &     2, 3, 5, 8, 9, 10, 11, 19, 21, 23\\
    2 &   100101111101111111111111111111 &    2, 3, 5, 11\\
    2 &    100101111110111111111111111111 &   2, 3, 5, 12\\
    2 &    100111111110111111111111111111 &    2, 3,12\\
    2 &   101011111111111111111111111111 &  2, 4\\
    2 &   110111110010111111111111111111 &  3, 9, 10, 12\\
    2 &  111110111110111111111111111111 &    6, 12\\
    2 &   111111111111111000111111111111 &    16, 17, 18\\
    2 &    111111111111111111111011111111 &   22\\
    1 & 000001101111111111111111111111 &   1, 2, 3, 4, 5, 8\\
    1 &   000101011111111111111111111111 &   1, 2, 3, 5, 7\\
    1 &   000101101110111001111111111111 &  1, 2, 3, 5, 8, 12, 16, 17\\
    1 &   000101101111111111111111111111 &   1, 2, 3, 5, 8\\
    1 &   000101110011111111111111111111 &    1, 2, 3, 5, 9, 10\\
    1 &   000111110011111111111111111111 &   1, 2, 3, 9,10\\
    1 &   001101111111111111111111111111 &   1, 2, 5\\
    1 &   001111111110111111111111111111 &  1, 2, 12\\
    1 &   010011110011111111111111111111 &   1, 3, 4, 9, 10\\
    1 & 010101100011111111010111111111 &   1, 3, 5, 8, 9, 10, 19, 21\\
    1 &  010101111111111111111111111111 &   1, 3, 5\\
    1 &  010111010011111111111111111111 &   1, 3, 7, 9, 10\\
    1 &   010111110010111111101111111111 &    1, 3, 9, 10, 12, 20\\
    1 &  010111110011011111111111111111 &   1, 3, 9, 10 , 13\\
    1 &   010111111111111111111111111111 &   1, 3\\
    1 &   011011010011111111111111111111   & 1, 4, 7, 9, 10\\
     1 &   011011011111101111111111111111 &   1, 4, 7, 14\\
    1  &  011011110011111111111111111111  &  1, 4, 9, 10\\
    1 &   011011111111101111111111111111 &   1, 4, 14\\
    1 &   011110011111111111111111111111 &   1, 6, 7\\
    1 &   011111011111110111111111111111 &   1, 7, 15\\
    1 &  011111110011011111111111111111 &     1, 9, 10, 13\\
     1 &   011111111110011111111111111111 &    1, 12, 13\\
     1 & 011111111111101011111111111111 &    1, 14, 16\\
    1 &  011111111111111000111111111111 &   1, 16, 17, 18\\
    1 &  011111111111111111111011111111 &  1, 22\\
    1 &   011111111111111111111110111111 &    1, 24\\
    1 &   100001101101011111111111111111 &  2, 3, 4, 5, 8, 11, 13\\
    1 &   100001101111111111101111111111 &   2, 3, 4, 5, 8, 20\\
    
\hline
\end{tabular}
\label{tab:source3-part2}
\end{table}

\begin{table}[ht]
\caption{Mutual Constraint Violations for Source 3 in Decreasing Order of Violation Part 3}
\centering
\begin{tabular}{c l l}
\hline\hline
Number & Binary ID for Violation Set & IDs of Violated Constraints \\ [0.5ex] % inserts table %heading
\hline
    
    1 &   100011001101111111111111111111 &    2, 3, 4, 7, 8, 11\\
    1 &   100011011111111111111111111111 &   2, 3, 4, 7\\
    1 &   100101000001111111111111111111 &    2, 3, 5, 7, 8, 9, 10, 11\\
    1 &    100101001100111111111111111111 &   2, 3, 5, 7, 8, 11, 12\\
    1 &   100110101111111111111111111111 &    2, 3, 6, 8\\
    1 &   100111101101111111111111111111     & 2, 3, 8, 11\\
    1 &   100111110010111111111111111111 &    2, 3, 9, 10, 12\\
    1 &   100111110011111111111111111111 &   2, 3, 9, 10\\
    1 &   101011011111111111111111111111 &    2, 4, 7\\
    1 &    101011111110111000111111111111 &   2, 4, 12, 16, 17, 18\\
    1 &  101011111111011111111111111111 &    2, 4, 13\\
    1 &  101101111111111111111011111111 &   2, 5, 22\\
    1 &    101111011110111111111111111111 &    2, 7, 12\\
    1 &  101111111110111000111111111111 &    2, 12, 16, 17, 18\\
    1 &   110011110011111111111111111111 &   3, 4, 9, 10\\
    1 &  110101110011111111011111111111 &   3, 5, 9, 10, 19\\
    1 &   110111010001111111111101111111 &   3, 7, 9, 10, 11, 23\\
    1 &  110111010011111111111111111111 &   3, 7, 9, 10\\
    1 &   110111110011101111111111111111 &    3, 9, 10, 14\\
    1 &    111011011111111111111011111111 &    4, 7, 22\\
    1 &   111011110011111111010101111111 &   4, 9, 10, 19, 21, 23\\
    1 &    11011110011111111111111111111 &   4, 9, 10\\
    1 &    111011111111101111111111111111 &   4, 14\\
    1 &  111111010011111111010101111111 &     7, 9, 10, 19, 21, 23\\
    1 &   111111011111101111111111111111 &   7, 14\\
    1 &   111111011111111001111111111111 &   7, 16, 17\\
    1 &   111111110010111111011111111111 &    9, 10, 12, 19\\
    1 &  111111110011111011111111111111 &    9, 10, 16\\
    1 &   111111111110111001111111111111 &    12, 16, 17\\
    1 &   111111111110111011111111111111 &    12, 16\\
    1 &   111111111110111111101111111111 &   12, 20\\
    1 &   111111111111111111011111111111 &    19\\
    1 &   111111111111111111111110111111 &   24\\
    1 &   111111111111111111111111011111 &   25\\

\hline
\end{tabular}
\label{tab:source3-part3}
\end{table}

\begin{table}[ht]
\caption{Mutual Constraint Violations for Source 4 in Decreasing Order of Violation Part 1}
\centering
\begin{tabular}{c l l}
\hline\hline
Number & Binary ID for Violation Set & IDs of Violated Constraints \\ [0.5ex] % inserts table %heading
\hline

250 &    011111111111111111111111111111 &    1\\
    119 &    111110111111111111111111111111 &     6\\
    67 &   011110111111111111111111111111 &    1, 6 \\
    45 &    111011111111111111111111111111 &    4\\
    23 &    101111111111111111111111111111 &    2\\
    22 &    100101101101111111111111111111 &    2, 3, 5, 8, 11\\
    21 &    000101101101111111111111111111 &   1, 2, 3, 5, 8, 11\\
    19 &    111111111111101111111111111111 &   14\\
    15 &    000111111111111111111111111111 &    1, 2, 3\\
    14 &    001111111111111111111111111111 &   1, 2\\
    13 &    100101111111111111111111111111 &    2, 3, 5\\
    13 &    100111111111111111111111111111 &   2, 3 \\
    12 &   111010111111111111111111111111 &    4, 6\\
    11 &   011111111111101111111111111111 &    1,14\\
    10 &   000101101111111111111111111111 &   1, 2, 3, 5, 8\\
    7 &   000101111111111111111111111111 &   1, 2, 3, 5\\
    7 &   000110111111111111111111111111 &    1, 2, 3, 6\\
    7 &   100101101111111111111111111111 &   2, 3, 5, 8\\
     6 &   100100101101111111111111111111 &    2, 3, 5, 6, 8, 11\\
    5 &   001110111111111111111111111111 &   1, 2, 6\\
    5 &   010111110011111111111111111111 &    1, 3, 9, 10\\
    5 &   101110111111111111111111111111 &    2, 6\\
    5 &   111110111111101111111111111111 &   6, 14\\
     4 &   011011111111111111111111111111 &    1, 4\\
    4 &   011110111111101111111111111111 &    1, 6, 14\\
    4 &   100110111111111111111111111111 &   2, 3, 6\\
    4 &   111111011111111111111111111111 &   7\\
    3 &   000100101101111111111111111111 &    1, 2, 3, 5, 6, 8, 11\\
    3 &   000100101111111111111111111111 &   1, 2, 3, 5, 6, 8\\
    3 &   000101101101101111111111111111 &    1, 2, 3, 5, 8, 11, 14\\
    3 &   100101111101111111111111111111 &    2, 3, 5, 11\\
    3 &   111111110011111111111111111111 &   9, 10\\
    3 &   111111111111111111111011111111 &   22 \\
    2 &   000100111111111111111111111111 &    1, 2, 3, 5, 6\\
    2 &   011010111111111111111111111111 &   1, 4, 6\\
    2 &   011111111111111001111111111111 &   1, 16, 17\\
    2 &   011111111111111111101111111111 &   1, 20\\
    2 &  100100111111111111111111111111 &   2, 3, 5, 6\\
    2 &   100101101101101111111111111111 &   2, 3, 5, 8, 11, 14\\
    2 &  100101111111101111111111111111 &   2, 3, 5, 14\\
    2 &   100111111111101111111111111111 &   2, 3, 14\\
    2 &   101011111111111111111111111111 &    2, 4\\
    2 &   01111111111101111111111111111 &    2, 14\\
    2 &   111111111111111001111111111111 &    16, 17\\
    1 &   000100110011111111111111111111 &   1, 2, 3, 5, 6, 9,10 \\
    1 &   000100111111111111101111111111 &    1, 2, 3, 5, 6, 20\\
    1 &   000110111111101111111111111111 &   1, 2, 3, 6, 14\\
    1 &   000111111111111001111111111111 &   1, 2, 3, 16, 17\\

\hline
\end{tabular}
\label{tab:source4-part1}
\end{table}

\begin{table}[ht]
\caption{Mutual Constraint Violations for Source 4 in Decreasing Order of Violation Part 2}
\centering
\begin{tabular}{c l l}
\hline\hline
Number & Binary ID for Violation Set & IDs of Violated Constraints \\ [0.5ex] % inserts table %heading
\hline

 1 &   001101101101111111111011111111 &   1, 2, 5, 8, 11,22 \\
    1 &   001101101111111111111011111111 &   1, 2, 5, 8,  22\\
    1 &   001111111111101111111111111111 &    1, 2, 14\\
    1 &   010110110010111111111111111111 &   1, 3, 6, 9, 10, 12\\
    1 &   010110110011111111111111111111 &    1, 3, 6, 9, 10\\
    1 &   011011110011111111111111111111 &   1, 4, 9, 10\\
    1 &   011110110011111111111111111111 &   1, 6, 9, 10\\
    1 &   011111111111111111111011111111 &    1, 22\\
    1 &   100000111111111111111111111111 &   2, 3, 4, 5, 6\\
    1 &  100001110011111111111111111111 &    2, 3, 4, 5, 9, 10\\
    1 &   100001111111111111111111111111 &    2, 3, 4, 5\\
    1 &    100100101111111111111111111111 &   2, 3, 5, 6, 8\\
    1 &  100100111101111111111111111111 &   2, 3, 5, 6, 11\\
    1 &   100101101111101111111111111111 &   2, 3, 5, 8, 14\\
    1 &   100111110011111111111111111111 &    2, 3, 9, 10\\
    1 &  101101101101111111111011111111 &     2, 5, 8, 11, 22\\
    1 &   101111111111111011111111111111 &    2, 16\\
    1 &   110111110010101111101111111111 &    3, 9, 10, 12, 14, 20\\
    1 &   110111110010111111111111111111 &    3, 9, 10, 12\\
    1 &   110111110011111111111111111111 &   3, 9, 10\\
    1 &   111010110011111111111111111111 &    4, 6, 9, 10\\
    1 &  111011011111111111111110111111 &   4, 7, 24\\
    1 &   111110110011111111111111111111 &   6, 9, 10\\
    1 &  111110111111101001111111111111 &   6, 14, 16, 17\\
    1 &   111111110010111111111111111111 &   9, 10, 12\\
    1 &    111111111111011111111111111111 &   13\\
    1 &   111111111111101011111111111111   & 14, 16\\
    1 &  111111111111111111101111111111 &   20\\

\hline
\end{tabular}
\label{tab:source4-part2}
\end{table}

\begin{table}[ht]
\caption{Mutual Constraint Violations for Source 5 in Decreasing Order of Violation Part 1}
\centering
\begin{tabular}{c l l}
\hline\hline
Number & Binary ID for Violation Set & IDs of Violated Constraints \\ [0.5ex] % inserts table %heading
\hline
596  & 111111111101111111111111111111 &  11   0   0   0   0   0   0   0   0\\
    85 &    101111111111111111111111111111 &    '2   0   0   0   0   0   0   0   0\\
    51 &    111111101111111111111111111111 &   8   0   0   0   0   0   0   0   0\\
    44 &   101111111101111111111111111111 &   2  11   0   0   0   0   0   0   0\\
    42 &   111111111111111111111111111110 &    30   0   0   0   0   0   0   0   0\\
    38 &   111101101111010111111111101111 &    5   8  13  15  26   0   0   0   0\\
    34 &    111111101101111111111111111111 &   8  11   0   0   0   0   0   0   0 \\
    34 &   111111111111111111111110111111 &    24   0   0   0   0   0   0   0   0\\
    29 &   111111101111011111111111101111 &   8  13  26   0   0   0   0   0   0\\
    27 &    101111111111111111111111111110 &    2  30   0   0   0   0   0   0   0\\
    26 &    111111101111011111111111111111 &   8  13   0   0   0   0   0   0   0\\
    24 &   111111111111111111111100111111 &   23  24   0   0   0   0   0   0   0\\
    23 &   111110111101111111111111111111 &    6  11   0   0   0   0   0   0   0\\
    22 &   111111111111111110110111111111 &   18  21   0   0   0   0   0   0   0\\
    16 &   111111111101111111111111111110 &    11  30   0   0   0   0   0   0   0\\
    15 &   111111101101011111111111101111 &    8  11  13  26   0   0   0   0   0\\
    14 &    111101101101010111111111101111 &    5   8  11  13  15  26   0   0   0\\
    12 &   101111111101111111111111111110 &   2  11  30   0   0   0   0   0   0\\
    11 &   111111101101011111111111111111 &   8  11  13   0   0   0   0   0   0\\
    10 &    111111111011111111111111111111 &    10   0   0   0   0   0   0   0   0\\
    9 &   111111111111111111111101111111 &   23   0   0   0   0   0   0   0   0\\
    8 &   111110111111111111111111111111 &  6   0   0   0   0   0   0   0   0\\
    8 &   111111101101010111111111101111 &   8  11  13  15  26   0   0   0   0\\
    8 &   111111101111010111111111101111 &   8  13  15  26   0   0   0   0   0\\
    8 &   111111111101111110110111111111 &   11  18  21   0   0   0   0   0   0\\
    6 &   111111111001111111111111111111 &   10  11   0   0   0   0   0   0   0\\
    6 &   111111111101011110110111111111 &   11  13  18  21   0   0   0   0   0\\
    5 &   111111111111111011111111011111 &    16  25   0   0   0   0   0   0   0\\
    4 &   101111111111111111111110111111 &    2  24   0   0   0   0   0   0   0\\
    4 &  111111111111011110110111111111 &    13  18  21   0   0   0   0   0   0\\
    3 &  101111101111111111111111111111 &    2   8   0   0   0   0   0   0   0\\
    3 &   '111111111101111111111101111111 &   11  23   0   0   0   0   0   0   0\\
    3 &  111111111111111111111111110111 &   27   0   0   0   0   0   0   0   0\\
    2 &   101111101111011111111111111111 &   2   8  13   0   0   0   0   0   0\\
    2 &   101111111111111110110111111111 &    2  18  21   0   0   0   0   0   0\\
     2 &    111101101101010110110111101111 &    5   8  11  13  15  18  21  26   0\\
    2 &   111110111111111111111111111110 &    6  30   0   0   0   0   0   0   0\\
    2 &  111111101111011111111100101111 &   8  13  23  24  26   0   0   0   0\\
    2 &   111111111101111111111111011111 &    11  25   0   0   0   0   0   0   0\\
    2 &   111111111111111111111111011111 &   25   0   0   0   0   0   0   0   0\\
    1 &   011111110111011010010011011111 &    1   9  13  16  18  19  21  22  25\\
    1 &   011111110111111010010011011111 &   1   9  16  18  19  21  22  25   0\\
    1 &   011111111101110011111111011111 &   1  11  15  16  25   0   0   0   0\\
    1 &   101101101101010111111111101111 &     2   5   8  11  13  15  26   0   0\\
    1 &   101101101111010111111111101110 &   2   5   8  13  15  26  30   0   0\\
    1 &   101101101111010111111111101111 &    2   5   8  13  15  26   0   0   0\\
    1 &   101110111101111111111111111111 &   2   6  11   0   0   0   0   0   0\\
    1 &   101111101101011111111111101111 &   2   8  11  13  26   0   0   0   0\\

\hline
\end{tabular}
\label{tab:source5-part1}
\end{table}

\begin{table}[ht]
\caption{Mutual Constraint Violations for Source 5 in Decreasing Order of Violation Part 2}
\centering
\begin{tabular}{c l l}
\hline\hline
Number & Binary ID for Violation Set & IDs of Violated Constraints \\ [0.5ex] % inserts table %heading
\hline
 1 &   101111101101011111111111111110  &  2   8  11  13  30   0   0   0   0\\
   1 &  101111101101111111111110111111 &   2   8  11  24   0   0   0   0   0\\
    1 &   101111101101111111111111111110 &    2   8  11  30   0   0   0   0   0\\
    1 &   101111101111011111111111101111 &   2   8  13  26   0   0   0   0   0 \\
    1 &   101111101111011111111111111110 &    2   8  13  30   0   0   0   0   0\\
    1 &   101111110101011110110111111111 &   2   9  11  13  18  21   0   0   0\\
    1 &   101111111101011110110111111111 &   2  11  13  18  21   0   0   0   0\\
    1 &   101111111101111110110111111111 &   2  11  18  21   0   0   0   0   0\\
    1 &   101111111101111111111111011110 &    2  11  25  30   0   0   0   0   0\\
    1 &  101111111101111111111111011111 &   2  11  25   0   0   0   0   0   0\\
    1 &   101111111111011110110111111111 &    2  13  18  21   0   0   0   0   0\\
    1 &   111010111101111111111111111111 &   4   6  11   0   0   0   0   0   0\\
    1 &   111011101101111111111111111111 &   4   8  11   0   0   0   0   0   0\\
     1 &  111011111101111111111111111111 &   4  11   0   0   0   0   0   0   0\\
    1 & 111101101001010111111111101111 &     5   8  10  11  13  15  26   0   0\\
    1 &   111101101111011111111111101111 &   5   8  13  26   0   0   0   0   0\\
    1 &   111110101101010111111111101110 &    6   8  11  13  15  26  30   0   0\\
    1 &    111110101101011111111111101111 &   6   8  11  13  26   0   0   0   0\\
    1 &   111110101101111111111111011111 &   6   8  11  25   0   0   0   0   0\\
    1 &   111110101101111111111111111111 &    6   8  11   0   0   0   0   0   0\\
     1 &   111110101111011111111111111111 &   6   8  13   0   0   0   0   0   0\\
    1 &  111111100111011110110111101111 &    8   9  13  18  21  26   0   0   0\\
    1 &  111111100111011110110111111111 &   8   9  13  18  21   0   0   0   0\\
    1 &  111111101001011111111111111111 &   8  10  11  13   0   0   0   0   0\\
    1 &   111111101101011110110111111111 &   8  11  13  18  21   0   0   0   0\\
    1 &  111111101111010111111111101110 &   8  13  15  26  30   0   0   0   0\\
    1 &   111111101111011011111011011111 &    8  13  16  22  25   0   0   0   0\\
    1 &   111111101111011011111111011111 &   8  13  16  25   0   0   0   0   0\\
    1 &   111111101111011110110111111111 &    8  13  18  21   0   0   0   0   0\\
    1 &  111111101111011111111101111111 &  8  13  23   0   0   0   0   0   0\\
    1 &   111111101111011111111110101111 &   8  13  24  26   0   0   0   0   0\\
    1 &   111111101111011111111111111110 &   8  13  30   0   0   0   0   0   0\\
    1 &   111111101111110111111111101110 &   8  15  26  30   0   0   0   0   0\\
    1 &   111111101111111111111100111111 &    8  23  24   0   0   0   0   0   0\\
    1 &  111111101111111111111110111111 &   8  24   0   0   0   0   0   0   0\\
    1 &  111111101111111111111111111110 &    8  30   0   0   0   0   0   0   0\\
    1 &  111111110111011110110111111111 &   9  13  18  21   0   0   0   0   0\\
    1 &    111111111011111111111111111110 &   10  30   0   0   0   0   0   0   0\\
    1 &   111111111101011110110111110111 &   11  13  18  21  27   0   0   0   0\\
    1 &   111111111101111111111100111111 &   11  23  24   0   0   0   0   0   0\\
    1 &   111111111101111111111111111011 &    11  28   0   0   0   0   0   0   0\\
    1 &   111111111111111011111011011110 &   16  22  25  30   0   0   0   0   0\\
    1 &   111111111111111110110100111111 &   18  21  23  24   0   0   0   0   0\\
    1 &   111111111111111110110110111111 &   18  21  24   0   0   0   0   0   0\\
    1 &   111111111111111111111111011110 &   25  30   0   0   0   0   0   0   0\\
\hline
\end{tabular}
\label{tab:source5-part2}
\end{table}

\section{Example: Simplicial Complexes for Source 2 } 

The simplicial complex for each data source is generated by listing the mutual constraint violation sets for the data items in each data source. These lists are shown for Sources 2, 3, 4 and 5 in the right hand columns of the  long tables in Appendix A. 
The lexicographically sorted list for Source 2, inferrred from the right hand columns of the  table for Source 2  is shown in Table~\ref{tab:source2-simpcomplex}. This was the input to chomp-simplicial for computation of the betti numbers for Source 2.

\begin{table}[ht]
\caption{Simplicial Complex Generators for Source 2: Maximal Mutual Constraint Violation Sets}
\centering
\begin{tabular}{r r r r r r r r}

1 \\
 1 & 2\\
 1 & 2 & 3\\
 1& 2 & 3 & 5\\
 1 & 2 & 3 & 5 & 8\\
 1 & 2 & 3 & 5 & 8 & 11\\
 1 & 2 & 3 & 5 & 8 & 11 & 16\\
 1 & 2 & 3  & 9 & 10 & 12\\
 1 & 2 &  3 & 14\\
 1 & 2 & 14\\
 1 & 3 & 9  &10\\
 1 & 4\\
 1 & 4 &14\\
 1 & 6\\
 1 & 9& 10\\
 1 & 14\\
 1 & 16 &17 & 18\\
 1 &18\\
2\\
 2 & 3\\
 2 & 3 & 4\\
 2 & 3 & 4 & 5\\
 2 & 3 & 4 & 5  & 8 & 11\\
 2 & 3 & 5\\
 2 & 3 & 5 & 8\\
 2 & 3  & 5 & 8 & 11\\
 2  & 3 & 5 & 9 & 10\\
 2  & 3 & 16\\
 2 & 4\\
 2 & 5\\
 2 & 5 & 8 & 13 & 15\\
 2 & 16 & 17\\
 3 & 4 & 9 & 10\\
 3 & 9 & 10\\
 3  & 9 & 10 & 12\\
 4 \\
 6 \\
 7 \\
 9 & 10\\
 9 & 10 & 12\\
 9 & 10 & 12 & 16 & 17 & 18 & 22 & 23\\
  13\\
 14\\
 15\\
 16 & 17\\
 16 & 17 &18\\
 20\\
 24\\

\end{tabular}
\label{tab:source2-simpcomplex}
\end{table}

\end{document}